\theoremstyle{definition}
\newtheorem{thm}{Theorem}[section]
\newtheorem{lem}[thm]{Lemma}
\newtheorem{cor}[thm]{Corollary}
\newtheorem{prop}[thm]{Proposition}
\theoremstyle{definition}
\newtheorem{rem}[thm]{Remark}
\numberwithin{equation}{section}
\def\A{{\mathbb A}}
\def\F{{\mathbb F}}
\def\Q{{\mathbb Q}}
\def\R{{\mathbb R}}
\def\Z{{\mathbb Z}}
\def\C{{\mathbb C}}
\def\Frob{\mathop{\mathrm{Frob}}\nolimits}
\def\Aut{\mathop{\mathrm{Aut}}\nolimits}
\def\Gal{\mathop{\mathrm{Gal}}\nolimits}
\def\NS{\mathop{\mathrm{NS}}\nolimits}
\def\GL{\mathop{\mathrm{GL}}\nolimits}
\def\SL{\mathop{\mathrm{SL}}\nolimits}
\def\Tr{\mathop{\mathrm{Tr}}\nolimits}
\def\det{\mathop{\mathrm{det}}\nolimits}
\begin{document}

\title[Galois representations]
{Galois representations associated with a non-selfdual automorphic
representation of $\GL(3)$}

\author{Tetsushi Ito}
\address{Department of Mathematics, Faculty of Science, Kyoto University, Kyoto 606-8502, Japan}
\email{tetsushi@math.kyoto-u.ac.jp}

\author{Teruhisa Koshikawa}
\address{Research Institute for Mathematical Sciences, Kyoto University, Kyoto 606-8502, Japan}
\email{teruhisa@kurims.kyoto-u.ac.jp}

\author{Yoichi Mieda}
\address{Graduate School of Mathematical Sciences, The University of Tokyo,
3-8-1 Komaba Meguro-ku Tokyo 153-8914, Japan} 
\email{mieda@ms.u-tokyo.ac.jp}

\date{\today}

\subjclass[2010]{Primary 11R39; Secondary 11F80, 11F70}
\keywords{Galois representations, automorphic representations, motives}


\date{\today}

\maketitle

\begin{abstract}
In 1994, van Geemen and Top constructed
a non-selfdual motive of rank three over $\Q$
conjecturally associated with
a cuspidal non-selfdual automorphic representation of $\GL_3(\A_{\Q})$
of level $\Gamma_0(128)$.
They experimentally confirmed the coincidence of the local $L$-factors at finitely many primes using computer.
In this paper, we shall prove the coincidence of
the local $L$-factors at every prime.
To show this, we use the recent results of Harris-Lan-Taylor-Thorne and Scholze
on the construction of Galois representations,
and Greni\'e's results to compare three-dimensional $2$-adic Galois representations.
We also prove the local-global compatibility at $p = 2$,
including the case $p = \ell$.
\end{abstract}

\section{Introduction}

In this paper, we prove that
the motive of rank three over $\Q$ with $\Q(\sqrt{-1})$-coefficients constructed by van Geemen-Top in \cite{vanGeemenTop:Invent}
is associated with a cuspidal non-selfdual automorphic representation of $\GL_3(\A_{\Q})$ of level $\Gamma_0(128)$.
We shall prove the coincidence of the local $L$-factors at every $p \geq 3$.
We shall also prove the local-global compatibility at $p = 2$,
including the case $p = \ell$.

To the knowledge of the authors, our results give the first example of a non-selfdual motive of rank three over $\Q$ associated with a cuspidal non-selfdual automorphic representation of $\GL_n(\A_{\Q})$ ($n \geq 3$)
whose associated $\ell$-adic representations
are not ``geometrically polarizable,'' more precisely, not odd in the sense of \cite[Definition 2.3]{JohanssonThorne}. (As explained in \emph{ibid.}, Galois representations appearing in the cohomology of Shimura varieties are expected to be odd.)

\subsection{The results of van Geemen-Top}

Before giving the statements of our results,
we briefly recall the results of van Geemen-Top.
(See \cite{vanGeemenTop:Invent} for details.)

We choose a square root $\sqrt{-1} \in \C$ of $-1$,
and fix an isomorphism
$\iota_{\ell} \colon \overline{\Q}_{\ell} \cong \C$ for every $\ell$.
By decomposing the transcendental part of the $\ell$-adic cohomology of
a projective smooth surface $S_2$ whose affine model is
\[ t^2 = xy(x^2 - 1)(y^2 - 1)(x^2 - y^2 + 2 xy), \]
van Geemen-Top constructed a three-dimensional $\ell$-adic representation
\[ \rho_{\mathrm{vGT},\ell} \colon \Gal(\overline{\Q}/\Q) \to \GL_3(\Q(\sqrt{-1})_{v_{\ell}}). \]
Here $v_{\ell}$ is the finite place of $\Q(\sqrt{-1})$ above $\ell$
given by the fixed isomorphism $\iota_{\ell}$.

A remarkable property of the compatible system $\{ \rho_{\mathrm{vGT},\ell} \}_{\ell}$
is that it is associated with a non-selfdual automorphic
representation of $\GL_3(\A_\Q)$.
It was discovered experimentally by van Geemen-Top.

Let $\Gamma_0(128) \subset \SL_3(\Z)$ be the subgroup consisting of matrices $(a_{ij})_{1 \leq i,j \leq 3}$ with $a_{21} \equiv a_{31} \equiv 0 \pmod{128}$.
In \cite{vanGeemenTop:Invent},
van Geemen-Top found a group cohomology class 
\[ u \in H^3(\Gamma_0(128),\C), \]
which is an eigenclass for Hecke operators.
It can be shown that $u$ is cuspidal.
(See Section \ref{Remark:Cuspidality}.)

Let $\pi$ be the cuspidal automorphic representation of $\GL_3(\A_\Q)$ associated with $u$.
We decompose $\pi$ into the restricted tensor product
$\pi = \bigotimes'_{p \leq \infty} \pi_p$.
By calculating the action of Hecke operators explicitly,
they confirmed the equality of local $L$-factors
\begin{equation}
\label{EqualityLfactors}
\det(1 - p^{-s} \rho_{\mathrm{vGT},\ell}(\Frob_p)) = L(s,\pi_p)
\end{equation}
for every $p$ with $3 \leq p \leq 67$ and $p \neq \ell$.
Here $\Frob_p$ denotes the geometric Frobenius element.
(See Section \ref{Subsection:GeometricFrobenius}.)
Later, this result was extended to $p \leq 173$ by
van Geemen-van der Kallen-Top-Verberkmoes
\cite{vanGeemenvanderKallenTopVerberkmoes}.

\begin{rem}
The cuspidal automorphic representation $\pi$ is written as $\pi_u$ in \cite[Section 2.4]{vanGeemenTop:Invent}.
It is not unitary because
the central character of $\pi$ is $\lvert \cdot \rvert^{-3}$.
(Here $\lvert \cdot \rvert \colon \A_{\Q}^{\times} \to \C^{\times}$
is the id\`ele norm.)
The functional equation relates $L(s,\pi)$ and $L(3-s,\pi^{\vee})$,
where $\pi^{\vee}$ is the contragredient representation of $\pi$.
\end{rem}

\subsection{The main results of this paper}

In this paper, we shall prove
that the equality (\ref{EqualityLfactors}) holds for every $p \geq 3$.
Moreover, we shall prove that
the compatible system $\{ \rho_{\mathrm{vGT},\ell} \}_{\ell}$
is associated with the automorphic representation $\pi$ and they satisfy the local-global compatibility at $p=2$, including the case $p = \ell$.

Here is the main theorem of this paper.

\begin{thm}
\label{MainTheorem}
Let $\ell, p$ be prime numbers.
\begin{enumerate}
\item Assume $p \neq \ell$.
The $\ell$-adic representation $\rho_{\mathrm{vGT},\ell}$
is unramified at $p$ if and only if $p \neq 2$.
Moreover, we have
\[
\mathrm{WD}(\rho_{\mathrm{vGT},\ell}|_{W_{\Q_p}})^{\text{\rm F-ss}}
\cong \iota_{\ell}^{-1} \mathrm{rec}_{\Q_p}(\pi_p).
\]

\item Assume $p = \ell$.
The $p$-adic representation $\rho_{\mathrm{vGT},p}$ is de Rham at $p$ with Hodge-Tate weights $0$, $1$, $2$, each with multiplicity one.
It is crystalline at $p$ if and only if $p \neq 2$.
Moreover, we have
\[
\mathrm{WD}(\rho_{\mathrm{vGT},p}|_{\Gal(\overline{\Q}_p/\Q_p)})^{\text{\rm F-ss}}
\cong \iota_p^{-1} \mathrm{rec}_{\Q_p}(\pi_p).
\]
\end{enumerate}
\end{thm}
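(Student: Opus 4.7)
\medskip

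\noindent\textbf{Proof plan.}

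The plan is to prove the theorem in three stages: construct, via the work of Harris-Lan-Taylor-Thorne and Scholze, a compatible system $\{\rho_{\pi,\ell}\}_\ell$ of three-dimensional Galois representations attached to $\pi$; identify this system with the van Geemen-Top system $\{\rho_{\mathrm{vGT},\ell}\}_\ell$; and then transfer local-global compatibility from the former to the latter.

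For the construction, the expected Hodge-Tate weights $0,1,2$ are regular, so $\pi_\infty$ has regular infinitesimal character, and HLTT/Scholze produces, for each prime $\ell$, a continuous semisimple representation
\[
\rho_{\pi,\ell}\colon \Gal(\overline{\Q}/\Q) \longrightarrow \GL_3(\overline{\Q}_\ell)
\]
unramified outside $2\ell$ whose Frobenius characteristic polynomials at unramified primes are determined by the Satake parameters of $\pi$. I would then import from the literature the subsequent refinements which establish full local-global compatibility up to Frobenius semi-simplification at every finite place, including the de Rham property with the expected Hodge-Tate weights at $p=\ell$.

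The technical heart is the identification $\rho_{\pi,\ell}\cong \rho_{\mathrm{vGT},\ell}$, which I would carry out first at $\ell=2$. Both representations are unramified outside $\{2\}$: for $\rho_{\mathrm{vGT},2}$ this follows from the geometric origin since the discriminant of the affine model of $S_2$ is a power of $2$. Van Geemen-Top's original verification of (\ref{EqualityLfactors}) for $3\le p\le 67$, extended in \cite{vanGeemenvanderKallenTopVerberkmoes} to $p\le 173$, supplies equality of Frobenius characteristic polynomials on a substantial finite set of primes. Greni\'e's effective Chebotarev-type bounds, tailored to three-dimensional $2$-adic Galois representations with this kind of tightly controlled ramification, convert this numerical agreement into an isomorphism $\rho_{\pi,2}^{\mathrm{ss}} \cong \rho_{\mathrm{vGT},2}^{\mathrm{ss}}$. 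For $\ell\ne 2$ I would exploit that both sides form strictly compatible systems -- the van Geemen-Top side by its geometric construction, the HLTT/Scholze side because its Frobenius polynomials at unramified primes are dictated by the fixed $\pi$ -- so agreement at $\ell=2$ propagates to agreement of Frobenius polynomials at every $p\nmid 2\ell$, and Chebotarev density yields $\rho_{\pi,\ell}^{\mathrm{ss}} \cong \rho_{\mathrm{vGT},\ell}^{\mathrm{ss}}$.

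With the identification in hand, part (1) for $p\ne 2$ is immediate from unramified local-global compatibility, and part (1) for $p=2$ follows from local-global compatibility at the ramified finite place; the conductor calculation then confirms that $\rho_{\mathrm{vGT},\ell}$ is ramified precisely at $p=2$. For part (2) with $p=\ell\ne 2$, good reduction of $S_2$ at $p$ makes $\rho_{\mathrm{vGT},p}$ crystalline, and the Hodge-Tate weights $0,1,2$ each of multiplicity one are read from the transcendental part of the Hodge decomposition of $H^2(S_2)$. The hard part will be $p=\ell=2$: here $\rho_{\mathrm{vGT},2}$ is de Rham but not crystalline, and matching its filtered $(\varphi,N)$-module with $\iota_2^{-1}\mathrm{rec}_{\Q_2}(\pi_2)$ relies on the most delicate input, namely local-global compatibility at $p=\ell$ for the HLTT/Scholze representation at the prime $2$.
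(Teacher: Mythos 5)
Your high-level strategy (construct $\rho_{\pi,\ell}$ via HLTT/Scholze, compare with $\rho_{\mathrm{vGT},2}$ via Greni\'e, spread to all $\ell$ by Chebotarev, then transfer local-global data) matches the paper's skeleton, but several of your ``import from the literature'' steps are not actually available, and the paper has to supply new arguments at exactly those points.

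First, Greni\'e's Theorem \ref{Theorem:Grenie} requires both representations to be defined over a finite \emph{totally ramified} extension of $\Q_2$. For $\rho_{\mathrm{vGT},2}$ this is built into the construction, but for $\rho_{\pi,2}$ it does not follow from HLTT/Scholze: their patching/congruence methods give no control over the field of definition. Your proposal passes over this, but it is a genuine obstruction. The paper addresses it with Lemma \ref{Lemma:RepresentationImage} (a Chin-type descent criterion) together with the explicit observation that the characteristic polynomials of $\rho_{\pi,2}(\Frob_3)$ and $\rho_{\pi,2}(\Frob_{11})$ have three distinct roots in $\Q(\sqrt{-1})_{v_2}$, yielding Lemma \ref{Lemma:ImageAssociated2-adicRepresentation}. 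Without this step the Greni\'e comparison cannot even be set up.

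Second, and more seriously, the ``subsequent refinements'' establishing full local-global compatibility for the HLTT/Scholze representations do not exist in the form you need. Varma's theorem gives compatibility only \emph{up to semisimplification of the Weil--Deligne representation} (i.e.\ forgetting the monodromy operator) and only for $p\ne\ell$; the de Rham/Hodge--Tate/crystalline statements at $p=\ell$, and in particular at $p=\ell=2$, were not available for $\rho_{\pi,\ell}$ at all. So your plan for $p=2$, $p\ne\ell$ still has to produce the monodromy operator, which the paper recovers by the Taylor--Yoshida purity method: since (up to a quadratic twist) $\rho_{\mathrm{vGT},\ell}$ sits in $H^2$ of the surface $S_2$, the weight--monodromy conjecture for surfaces gives purity of $\mathrm{WD}(\rho_{\pi,\ell}|_{W_{\Q_2}})^{\mathrm{F\text{-}ss}}$; temperedness of $\pi'_2$ gives purity of $\mathrm{rec}_{\Q_2}(\pi_2)$; and purity forces the monodromy operators to agree. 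Likewise your plan for $p=\ell=2$ explicitly invokes ``local-global compatibility at $p=\ell$ for the HLTT/Scholze representation at $2$,'' which is precisely what the paper says is unavailable and why it calls this ``the most delicate case.'' The paper sidesteps it entirely: it proves an $\ell$-independence statement for $\mathrm{WD}(\rho_{\mathrm{vGT},p}|_{\Gal(\overline{\Q}_p/\Q_p)})$ via a generalization of Ochiai's theorem to proper smooth varieties equipped with a finite-order automorphism (Lemma \ref{Lemma:Independence}, using an unramified-twist trick), compares the $2$-adic Weil--Deligne traces with the $\ell'$-adic ones for an auxiliary $\ell'\ne 2$ where Varma applies, and then handles the monodromy by the weight--monodromy conjecture for surfaces in the $p$-adic case (de Jong alterations, the semi-stable comparison theorem of Tsuji, and Mokrane's weight spectral sequence). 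The same $\ell$-independence mechanism is also what supplies the crystalline Frobenius matching at $p=\ell\ne 2$ in part (2); it does not come from any property of the HLTT/Scholze construction. In short, your proposal has the right architecture but treats as given precisely the local-global inputs that the paper must, and does, establish from scratch.
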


Here  $\mathrm{rec}_{\Q_p}$ denotes the local Langlands correspondence
for $\GL_3$ over $\Q_p$ as normalized in \cite{HarrisTaylor}.
Thus
$\iota_{\ell}^{-1} \mathrm{rec}_{\Q_p}(\pi_p)$
is the Weil-Deligne representation over $\overline{\Q}_{\ell}$ associated with the irreducible admissible representation $\pi_p$ of $\GL_3(\Q_p)$.

In Theorem \ref{MainTheorem} (1),
the left hand side is the Frobenius semisimplification of
the Weil-Deligne representation
associated with the restriction of $\rho_{\mathrm{vGT},\ell}$ to the Weil group $W_{\Q_p}$.
(See \cite[Section 1]{TaylorYoshida} for details.)
If $p \neq 2$, the isomorphism in Theorem \ref{MainTheorem} (1)
is nothing but a restatement of the equality of local $L$-factors (\ref{EqualityLfactors}).

Theorem \ref{MainTheorem} (2)
concerns the $p$-adic properties of the compatible system.
We follow the convention that the $p$-adic cyclotomic character has Hodge-Tate weight $-1$.
The left hand side of Theorem \ref{MainTheorem} (2)
is the Frobenius semisimplification of the Weil-Deligne representation
constructed by Fontaine's $\widehat{D}_{\mathrm{pst}}$-functor.
(See \cite[Section 1]{TaylorYoshida}.)
If $p \neq 2$, Theorem \ref{MainTheorem} (2)
means the local $L$-factor $L(s,\pi_p)$ is equal to the local $L$-factor of the $p$-adic representation $\rho_{\mathrm{vGT},p}$ at $p$
given by the crystalline Frobenius
via the crystalline comparison theorem.

Theorem \ref{MainTheorem} asserts the equality of
the $L$-functions coming from motives and automorphic representations.
As usual, it has several standard consequences
both on the motivic side and the automorphic side.

For example, on the motivic side, we have the following results.

\begin{cor}
\label{Corollary:HasseWeil}
The Hasse-Weil $L$-function of the surface $S_2$ studied by van Geemen-Top
has analytic continuation to the complex plane,
and it satisfies a functional equation.
\end{cor}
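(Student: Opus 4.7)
The plan is to factor the Hasse--Weil $L$-function of $S_2$ as a product of $L$-functions of motivic pieces cut out of $H^\ast_{\mathrm{et}}(S_{2,\overline{\Q}},\Q_\ell)$, and handle each piece by an existing analytic continuation and functional equation result. For a smooth projective model of $S_2$ the pieces $H^0$ and $H^4$ contribute $\zeta(s)$ and $\zeta(s-2)$; the pieces $H^1$ and $H^3$ contribute the $L$-function of the Albanese variety (zero or an abelian motive over $\Q$), which is entire with functional equation by modularity (or, in the CM case, by Hecke's theorem); and the Néron--Severi summand of $H^2$ contributes a product of Artin/Dirichlet $L$-functions of one-dimensional characters of $\Gal(\overline{\Q}/\Q)$, handled by class field theory.

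The crucial piece is the transcendental part $T \subset H^2$. By the construction of \cite{vanGeemenTop:Invent}, after extending scalars from $\Q_\ell$ to $\Q(\sqrt{-1})_{v_\ell}$ the representation $\rho_{\mathrm{vGT},\ell}$ appears as a direct summand of $T$, and the complementary three-dimensional summand is its complex conjugate $\overline{\rho}_{\mathrm{vGT},\ell}$, corresponding under Theorem~\ref{MainTheorem} to the Galois conjugate $\pi'$ of $\pi$ (obtained by composing $\iota_\ell$ with complex conjugation). Applying parts~(1) and~(2) of Theorem~\ref{MainTheorem} at every prime $p$ --- in particular at the bad prime $p=2$ and in the crystalline/de~Rham regime $p=\ell$ --- one identifies the local $L$-factor of $T$ at every finite place with $L(s,\pi_p)L(s,\pi'_p)$. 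Hence the $L$-function of $T$ equals $L(s,\pi)L(s,\pi')$ up to compatible archimedean factors, and this product has analytic continuation and a functional equation by Godement--Jacquet for $\GL_3$ (as recalled, with the shift to $3-s$ forced by the non-unitary central character, in the remark preceding Theorem~\ref{MainTheorem}).

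Multiplying the contributions of $H^i$ together (odd $i$ entering with inverse exponents) reassembles the Hasse--Weil $L$-function of $S_2$, and multiplying the individual functional equations --- after matching $\Gamma$-factors using the Hodge--Tate weights in Theorem~\ref{MainTheorem}(2) --- yields the desired functional equation. The step I expect to be the most delicate is the bookkeeping at the bad prime $p=2$: the entire argument hinges on the fact that the Weil--Deligne representations read off from $\ell$-adic cohomology at $p=2$ match the automorphic local factors at $p=2$, which is exactly what the $p=2$ case of Theorem~\ref{MainTheorem} (including $p=\ell=2$) supplies, and is the reason that case had to be established in the main theorem rather than assumed away.
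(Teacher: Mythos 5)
Your proposal is essentially the paper's own argument: use $b_1 = b_3 = 0$, dispose of $H^0$, $H^4$ and the algebraic part of $H^2$ via $\zeta$- and Artin-type $L$-functions, and identify the transcendental part of $H^2$ with $L(s,\pi)L(s,\pi^\vee)$ (up to the quadratic twist by $\chi_{-2}$ and a Tate shift) using Theorem~\ref{MainTheorem} and Remark~\ref{Remark:ConjugateGaloisRepresentation}. Two of your side remarks overreach slightly --- modularity of general abelian varieties over $\Q$ is open, and the N\'eron--Severi Galois action need not factor through an abelian quotient, so one should invoke Brauer's theorem rather than class field theory --- but both are harmless here since $b_1 = 0$ and only meromorphic continuation with functional equation is required.
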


In fact, the first and the third Betti numbers of $S_2$ are zero.
(See \cite[Section 3.1]{vanGeemenTop:Invent}.)
The transcendental part of $H^2$ is the direct sum of
$\rho_{\mathrm{vGT},\ell}$ and the contragredient representation of
$\rho_{\mathrm{vGT},\ell}(2)$, where $(2)$ denotes the Tate twist.
(See Remark \ref{Remark:ConjugateGaloisRepresentation}.)

On the automorphic side, we have the following results.

\begin{cor}
The  cuspidal non-selfdual automorphic representation $\pi$
of $\GL_3(\A_\Q)$ of level $\Gamma_0(128)$
found by van Geemen-Top
satisfies the Ramanujan-Petersson conjecture,
i.e.\ the local component $\pi_p$ is
(essentially) tempered for every $p$, including $p = 2$.
\end{cor}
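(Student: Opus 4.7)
The plan is to transport the temperedness of $\pi_p$ to purity of the attached Weil--Deligne parameter via Theorem \ref{MainTheorem}, and then to exploit the fact that $\rho_{\mathrm{vGT},\ell}$ is realized in the $H^2$ of a smooth projective surface.

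Recall that an irreducible admissible representation of $\GL_3(\Q_p)$ is essentially tempered if and only if its local Langlands parameter is pure as a Weil--Deligne representation. By Theorem \ref{MainTheorem}, for every prime $p$ this parameter is identified (via $\mathrm{rec}_{\Q_p}$) with the Frobenius-semisimple Weil--Deligne representation attached to $\rho_{\mathrm{vGT},\ell}|_{W_{\Q_p}}$ (when $p \neq \ell$), or to $\rho_{\mathrm{vGT},p}$ via Fontaine's $\widehat{D}_{\mathrm{pst}}$-functor (when $p = \ell$). It therefore suffices to show that these Weil--Deligne representations are pure of weight $2$.

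I would then use that $\rho_{\mathrm{vGT},\ell}$ occurs as a direct summand of the transcendental part of the $\ell$-adic \'etale cohomology $H^2(S_{2,\overline{\Q}}, \overline{\Q}_\ell)$, so that the required purity reduces to the corresponding statement for $H^2$ of $S_2$. For every $p \neq 2$, where by Theorem \ref{MainTheorem} (1) the representation $\rho_{\mathrm{vGT},\ell}$ is unramified at $p$, purity is immediate from Deligne's Weil conjectures. For $p = 2$ with $\ell \neq 2$, purity of the Weil--Deligne representation amounts to the weight-monodromy conjecture for the $H^2$ of a smooth projective surface, which is known via de Jong's alteration theorem combined with the Rapoport--Zink theorem in the semistable case. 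For $p = \ell = 2$, one transfers purity from the $\ell'$-adic side (any $\ell' \neq 2$) using the compatibility of Fontaine's $\widehat{D}_{\mathrm{pst}}$-functor with the $\ell$-adic realizations for geometric Galois representations, together with $\ell$-independence of the Frobenius-semisimple Weil--Deligne parameter in the potentially semistable setting.

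The hard part will be the prime $p = 2$ of bad reduction, but since $\dim S_2 = 2$, this falls within the range where weight-monodromy is established, so the argument should go through without serious obstacles.
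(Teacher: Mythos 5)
Your proposal is correct and matches the argument the paper has in mind (the corollary is stated without proof as a "standard consequence," and your chain Theorem \ref{MainTheorem} $\Rightarrow$ purity of the local parameter $\Rightarrow$ essential temperedness is exactly the intended one, with the purity of $\rho_{\mathrm{vGT},\ell}|_{W_{\Q_2}}$ coming from the weight-monodromy conjecture for surfaces and, at good primes, from the Weil conjectures).

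One small remark on the logical flow: the paper does not actually invoke a converse ``pure parameter $\Rightarrow$ essentially tempered'' statement; rather, it establishes the temperedness of $\pi'_2 = \pi_2 \otimes \lvert\det\rvert$ \emph{inside} the proof of Theorem \ref{MainTheorem} at $p=2$ (combining Varma's semisimplified local-global compatibility, purity of the geometric side, unitarity of $\pi'_2$, and \cite[Cor.\ VII.2.18]{HarrisTaylor}), and only then feeds temperedness into \cite[Lemma 1.4(3),(4)]{TaylorYoshida} to finish the theorem. Your version reads Theorem \ref{MainTheorem} as a black box and then applies ``pure $\Leftrightarrow$ essentially tempered'' for generic representations of $\GL_3(\Q_p)$; this is fine, but you should make explicit that (a) $\pi_p$ is generic since $\pi$ is cuspidal on $\GL_3$, and (b) the direction you need is the converse of \cite[Lemma 1.4(3)]{TaylorYoshida}, which is standard for $\GL_n$ but worth citing or recalling. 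Also, for $p=\ell$ you could simply choose an auxiliary $\ell' \neq p$ and use part (1) of Theorem \ref{MainTheorem}, since $\pi_p$ does not depend on the coefficient prime; your appeal to part (2) plus $\ell$-independence of $\widehat{D}_{\mathrm{pst}}$ is not wrong, but it is an unnecessary detour.
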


\begin{rem}
To the knowledge of the authors,
our results give the first example of
a cuspidal non-selfdual automorphic representation of
$\GL_n(\A_{\Q})$ ($n \geq 3$)
such that the Ramanujan-Petersson conjecture is proved for it
and the associated $\ell$-adic representations are
not odd in the sense of \cite[Definition 2.3]{JohanssonThorne}.
(See Remark \ref{Remark:ZariskiDensity}.)
\end{rem}

\subsection{Cuspidality of the eigenclass $u$}
\label{Remark:Cuspidality}

The cuspidality of the eigenclass $u$ was presumably well-known. But it was not stated in \cite{vanGeemenTop:Invent}.
Here we briefly explain how to associate an automorphic
representation $\pi$ to the eigenclass $u$ without assuming
the cuspidality of $u$.
Then we shall show that $u$ and $\pi$ are cuspidal.

Let $\widehat{\Z}$ be the profinite completion of $\Z$.
Let $U_0(128) \subset \GL_3(\widehat{\Z})$ be the open compact subgroup consisting of matrices $(a_{ij})_{1 \leq i,j \leq 3}$ with $a_{21} \equiv a_{31} \equiv 0 \pmod{128}$.
Let $\R_{>0}$ be the multiplicative group of positive real numbers.
We consider it as a subgroup of $\GL_3(\R)$ via the diagonal embedding.
The symmetric space of $\mathrm{SL}_3(\R)$ is denoted by
\[ X_{\infty} := \mathrm{SL}_3(\R)/\mathrm{SO}_3(\R). \]
Then we have an isomorphism
\begin{equation}
\label{EqualitySymmetricSpace}
\GL_3(\Q) \R_{>0} \backslash \GL_3(\A_{\Q}) / U_0(128) \mathrm{SO}_3(\R) \cong \Gamma_0(128) \backslash X_{\infty}.
\end{equation}

Recall that the eigenclass $u \in H^3(\Gamma_0(128), \C)$ is cuspidal
(i.e.\ it belongs to the subspace $H^3_{!}(\Gamma_0(128), \C)$; see \cite[Section 2]{AshGraysonGreen})
if and only if the system of Hecke eigenvalues given by $u$
is equal to the system of Hecke eigenvalues given by a cuspidal automorphic
representation of $\GL_3(\A_\Q)$.
(See \cite[Theorem 4.18]{HarderRaghuram} for example.)

If we assume $u$ is cuspidal, we can consider it as a cuspidal automorphic form on $X_{\infty}$
as in \cite[Section 2]{AshGraysonGreen}.
Then, by (\ref{EqualitySymmetricSpace}), we can consider it as a cuspidal automorphic form on $\GL_3(\A_{\Q})$.
We take the cuspidal automorphic representation of $\GL_3(\A_{\Q})$ generated by it, and twist it by $\lvert \det \rvert^{-1}$.
However, if we do not assume $u$ is cuspidal, there does not seem an easy way to construct an automorphic form on $X_{\infty}$.

We shall overcome this difficulty using Franke's theorem  as follows.
The group cohomology $H^3(\Gamma_0(128), \C)$ is identified with the cohomology of the left hand side of (\ref{EqualitySymmetricSpace}) with coefficients in the constant local system $\C$.
Franke proved the existence of a decomposition of the following form
\begin{align*}
&\qquad H^3(\GL_3(\Q) \R_{>0} \backslash \GL_3(\A_{\Q}) / U_0(128) \mathrm{SO}_3(\R),\, \C) \\
&\cong \bigoplus_{\{ P \}} \mathrm{Ext}^3_{(\mathrm{Lie}(\SL_3(\R)), \mathrm{SO}_3(\R))}(\C,\, S_{\infty}(\GL_3(\Q) \R_{>0} \backslash \GL_3(\A_{\Q}))_{\{ P \}}^{U_0(128)}).
\end{align*}
(See \cite[Section 7.6]{Franke} for details.)
The eigenclass $u$ gives an element of one of
the direct summands corresponding to a parabolic subgroup $P \subset \GL_3$.
As explained in Step 2 of \cite[p.~262]{Franke},
it corresponds to a regular algebraic automorphic representation
of $\GL_3(\A_{\Q})$.
We denote it by $\pi'$. Then we put
\[ \pi := \pi' \otimes \lvert \det \rvert^{-1}. \]
Since the central character of $\pi'$ is trivial,
the central character of $\pi$ is $\lvert \cdot \rvert^{-3}$.

\begin{rem}
The reason why we take the $(-1)$-twist is that
the $L$-factors calculated by van Geemen-Top in \cite{vanGeemenTop:Invent} coincide with the $L$-factors of $\pi$.
In \cite{vanGeemenTop:Invent},
van Geemen-Top used Ash-Grayson-Green's algorithm to calculate Hecke eigenvalues.
For details on the algorithm, see \cite[Section 4]{AshGraysonGreen}.
\end{rem}

We shall show the automorphic representation $\pi$ is cuspidal.
By the same argument as in Step 2 of \cite[p.~262]{Franke},
there exists a partition $3 = \sum_{i=1}^{k} n_i$
and a regular $L$-algebraic cuspidal automorphic representation
$\pi'_i$ of $\GL_{n_i}(\A_{\Q})$ for each $i$ such that
\[ L(s,\pi'_p) = \prod_{i=1}^{k} L(s,\pi'_{i,p}) \]
for every $p \neq 2$.
(For the definition of $L$-algebraic representations,
see \cite[Definition 5.10]{BuzzardGee}.)
If $\pi$ were not cuspidal,
at least one of $n_1,\ldots,n_k$ is equal to $1$.
We may assume $n_1 = 1$.
Then $\pi'_1 \colon \A_{\Q}^{\times} \to \C^{\times}$
is an algebraic Hecke character,
and its local $L$-factor $L(s,\pi'_{1,p})$ is of the form
$(1 - \chi(p) p^{m-s})^{-1}$
for an integer $m$ and a Dirichlet character $\chi$.
Since $L(s,\pi) = L(s-1,\pi')$,
the local $L$-factor $L(s,\pi_p)$ is divisible by
$(1 - \chi(p) p^{m+1-s})^{-1}$.
However, an explicit calculation of the local $L$-factor at $p=5$ shows that $L(s,\pi_5)$ does not satisfy this condition. (See \cite[Remark 3.12 (2)]{vanGeemenTop:Invent}.)
Therefore, the automorphic representation $\pi$ is cuspidal.
Consequently, by \cite[Theorem 4.18]{HarderRaghuram},
the eigenclass $u$ is cuspidal.

\begin{rem}
\label{Remark:UnitaryTwist}
The automorphic representation $\pi' = \pi \otimes \lvert \det \rvert$ is unitary because $\pi'$ is cuspidal and the central character of $\pi'$ is trivial.
\end{rem}

\begin{rem}
In principle, it should be possible to confirm the cuspidality of $u$ by a finite amount of calculation.
However, it seems that such calculation was not yet performed.
According to the last paragraph of \cite[Section 2.6]{vanGeemenvanderKallenTopVerberkmoes},
there does not exist a practical algorithm to check the cuspidality of
an eigenclass in the group cohomology unless the level is prime.
\end{rem}

\subsection{Outline of the proof of Theorem \ref{MainTheorem}}

We first use recent results
on the construction of $\ell$-adic representations of $\Gal(\overline{\Q}/\Q)$
associated with regular algebraic cuspidal automorphic representations of $\GL_3(\A_{\Q})$.
Such results were proved by Harris-Lan-Taylor-Thorne
\cite[Theorem A]{HLTT}.
A little later, they were also proved by Scholze by a different method \cite[Theorem 1.0.4]{Scholze:Torsion}\footnote{At the time of writing of this paper,
the main results of Scholze in \cite{Scholze:Torsion}
are still conditional.
But, since we work only  over $\Q$, the results we need in this paper were unconditionally proved in \cite{Scholze:Torsion}. (See \cite[Remark 5.4.6]{Scholze:Torsion}.)}.
By the Chebotarev density theorem
(see \cite[Chapter I, Section 2.2]{Serre:l-adic}),
the $\ell$-adic representations constructed by Harris-Lan-Taylor-Thorne and Scholze are equivalent.

Let us take $\ell = 2$.
By the results of Harris-Lan-Taylor-Thorne and Scholze,
we have a three-dimensional semisimple 2-adic representation
\[ \rho_{\pi,2} \colon \Gal(\overline{\Q}/\Q) \to \GL_3(\overline{\Q}_2) \]
such that it is unramified outside $2$ and $\infty$,
and it satisfies the equality
\[
\det(1 - p^{-s} \rho_{\pi,2}(\Frob_p))^{-1}
= L(s,\pi_p)
\]
for every $p \geq 3$.

We shall show the equivalence of $\rho_{\mathrm{vGT},2}$ and $\rho_{\pi,2}$
using Greni\'e's results \cite{Grenie}.
To use his results, we need to show the image of $\rho_{\pi,2}$ is contained in a subgroup conjugate to  $\GL_3(\Q(\sqrt{-1})_{v_2})$.
Here $v_2$ is the finite place of $\Q(\sqrt{-1})$ above $2$.
It does not seem to follow from the construction of $\rho_{\pi,2}$
in \cite{HLTT}, \cite{Scholze:Torsion}.
Fortunately, we can prove it using a representation-theoretic result
(see Lemma \ref{Lemma:RepresentationImage}).
Consequently, $\rho_{\mathrm{vGT},2}$, $\rho_{\pi,2}$ are equivalent,
and Theorem \ref{MainTheorem} is proved in the case $p \neq 2$ and $p \neq \ell$.
By the Chebotarev density theorem,
$\rho_{\mathrm{vGT},\ell}$ is equivalent to the semisimple $\ell$-adic representation $\rho_{\pi,\ell}$ associated with $\pi$.

Once Theorem \ref{MainTheorem} is proved in the case $p \neq 2$ and $p \neq \ell$,
it is standard to deduce Theorem \ref{MainTheorem}
in the case $p \neq 2$ and $p = \ell$
by comparing the characteristic polynomials of Frobenius
acting on the $\ell$-adic cohomology and the crystalline cohomology.

If $p = 2$, there is another difficulty:
the local-global compatibility was not fully established in
\cite{HLTT}, \cite{Scholze:Torsion}.
We shall use a rather  indirect argument to prove
Theorem \ref{MainTheorem} in the case $p = 2$.

If $p = 2$ and $p \neq \ell$,
we shall use the recent results of Varma on the local-global compatibility
up to semisimplification \cite{Varma}.
It implies $\rho_{\mathrm{vGT},\ell} \cong \rho_{\pi,\ell}$ satisfies Theorem \ref{MainTheorem} (1),
except possibly for the equality of the monodromy operators.
We shall show the equality of the monodromy operators
by the method of Taylor-Yoshida \cite{TaylorYoshida}.
We have already shown that $\rho_{\pi,\ell}$ is equivalent to
$\rho_{\mathrm{vGT},\ell}$.
Up to a quadratic twist,
$\rho_{\mathrm{vGT},\ell}$ is embedded in
the cohomology of a projective smooth surface;
for such $\ell$-adic representations,
the weight-monodromy conjecture is known to be true.
From this, we see that the Weil-Deligne representation
associated with
$\rho_{\mathrm{vGT},\ell} \cong \rho_{\pi,\ell}$
is pure in the sense of \cite[Section 1, p.~471]{TaylorYoshida}.
It is well-known that purity implies Theorem \ref{MainTheorem} (1).

Finally, the case $p = \ell = 2$ is proved by
a combination of Varma's results and
Ochiai's $\ell$-independence results \cite{Ochiai}.
We will actually need a slight generalization of Ochiai's results for varieties equipped with automorphisms of finite order.
We shall prove it using a trick of unramified twist.
(See Lemma \ref{Lemma:Independence}.)

\begin{rem}
A natural question is whether the methods of this paper can be
applied to other motives conjecturally associated with
non-selfdual (or selfdual) automorphic representations.
Note that van Geemen-Top constructed several motives of rank three over $\Q$;
see
\cite{vanGeemenTop:Invent},
\cite{vanGeemenTop:Compositio},
\cite{vanGeemenTop:LNCS},
\cite{vanGeemenvanderKallenTopVerberkmoes}.
It is an interesting problem to study them by the methods of this paper.
However, it is not clear whether the methods of this paper
can be applied to $\ell$-adic representations
which are ramified at some places outside $2, \infty$.
In this paper,  we crucially use Greni\'e's results to compare $2$-adic representations.
Greni\'e's results depend on rather special properties of a tower of number fields unramified outside $2, \infty$.
(See Theorem \ref{Theorem:Grenie}. See also \cite[Section 4]{Grenie}.)
\end{rem}

\subsection{Outline of this paper}

The outline of this paper is as follows.
In Section \ref{Section:GaloisRepresentations},
we recall basic results on Galois representations.
We also recall Greni\'e's results.
In Section \ref{Section:GaloisRepresentations:vanGeemenTop},
we recall the results of van Geemen-Top
on the construction of the compatible system 
$\{ \rho_{\mathrm{vGT},\ell} \}_{\ell}$.
In Section \ref{Section:HLTTScholze},
we briefly recall the results of Harris-Lan-Taylor-Thorne
and Scholze on the construction of $\ell$-adic representations.
Finally, in Section \ref{Section:ProofMainTheorem},
we prove Theorem \ref{MainTheorem}.

\section{$\ell$-adic representations of $\Gal(\overline{\Q}/\Q)$}
\label{Section:GaloisRepresentations}

In this section, we recall basic results on Galois representations.
We also fix notation used in this paper.

As in Introduction, we choose a square root $\sqrt{-1} \in \C$ of $-1$,
and fix an isomorphism
$\iota_{\ell} \colon \overline{\Q}_{\ell} \cong \C$
for every $\ell$.
For an element $a \in \C$, its complex conjugate is denoted by $a^c$.

\subsection{$\ell$-adic representations}

Let $\ell$ be a prime number.
An \textit{$\ell$-adic representation} of $\Gal(\overline{\Q}/\Q)$
means a continuous homomorphism 
\[ \rho \colon \Gal(\overline{\Q}/\Q) \to \GL_n(E) \]
for some $n \geq 1$.
Here $E$ is a finite extension of $\Q_{\ell}$ or $E = \overline{\Q}_{\ell}$.
For a subfield $E' \subset E$,
we say that $\rho$ is \textit{defined over $E'$} if the image of $\rho$ is contained in a subgroup conjugate to $\GL_n(E')$.
Any $\ell$-adic representation over $\overline{\Q}_{\ell}$
is defined over a finite extension of $\Q_{\ell}$.
(See \cite[Lemme 2.2.1.1]{BreuilMezard}, \cite[Corollary 5]{Dickinson}.)

Let $\rho$ be an $\ell$-adic representation defined over $E$.
Then the trace $\Tr \rho(\sigma)$ is in $E$ for every $\sigma \in \Gal(\overline{\Q}/\Q)$.
But, it is not always the case that $\rho$ is defined over
the field generated
by the traces of elements in the image of $\rho$.
At least for a semisimple representation,
there is a simple criterion for it to be defined over
the field generated by the traces.

The following result is presumably well-known.
It was used in the proof of
\cite[Proposition 3.2.5]{ChenevierHarris:PartII}.
In \cite[Proposition 7]{Chin},
Chin proved a similar result for absolutely irreducible representations.

\begin{lem}
\label{Lemma:RepresentationImage}
Let $G$ be a group, and $K$ a field of characteristic $0$.
Let $\rho \colon G \to \GL_n(\overline{K})$ be
a semisimple representation defined over $\overline{K}$.
Let $\rho \cong \rho_1 \oplus \cdots \oplus \rho_r$
be an irreducible decomposition of $\rho$.
Assume that the following conditions are satisfied:
\begin{enumerate}
\item $\rho$ is defined over a finite extension of $K$. 
\item $\Tr \rho(g) \in K$ for every $g \in G$.
\item There is an element $g_0 \in G$
such that the characteristic polynomial of $\rho(g_0)$
has $n$ distinct roots in $K$.
\end{enumerate}
Then each $\rho_i$ ($1 \leq i \leq r$) is defined over $K$.
In particular, $\rho$ is defined over $K$.
\end{lem}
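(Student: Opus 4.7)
\emph{Plan.} The plan is to decompose $\rho$ into absolutely irreducible components, show that the Galois group fixes each component individually (so that each character is $K$-valued), and then descend each component by exhibiting a split maximal \'etale subalgebra of the associated central simple $K$-algebra.

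By (1) we may conjugate so that $\rho$ takes values in $\GL_n(L)$ for some finite Galois extension $L/K$; set $\Gamma = \Gal(L/K)$. For $\sigma \in \Gamma$, the representation $\sigma\rho$ (apply $\sigma$ entrywise) has the same trace as $\rho$ by (2), so $\sigma\rho \cong \rho$ since semisimple representations in characteristic zero are determined by their characters. By (3), no irreducible component of $\rho$ can occur with multiplicity greater than one (else $\rho(g_0)$ would have repeated eigenvalues), so $\rho = \rho_1 \oplus \cdots \oplus \rho_r$ with pairwise non-isomorphic $\rho_i$. Uniqueness of this decomposition together with $\sigma\rho \cong \rho$ produces a permutation $\pi_\sigma$ of $\{1, \ldots, r\}$ with $\sigma\rho_i \cong \rho_{\pi_\sigma(i)}$. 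To show $\pi_\sigma = \mathrm{id}$, observe that the eigenvalue sets $S_i$ of $\rho_i(g_0)$ are pairwise disjoint subsets of $K$ by (3), while the eigenvalues of $\sigma\rho_i(g_0)$ are $\sigma(S_i) = S_i$ and must equal $S_{\pi_\sigma(i)}$; disjointness forces $\pi_\sigma(i) = i$. Hence each $\chi_i := \Tr \rho_i$ takes values in $L^\Gamma = K$.

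Now fix $i$ and set $m := n_i$. Let $A_i \subset M_m(\overline{K})$ be the $K$-subalgebra generated by $\rho_i(G)$. Burnside's theorem shows that the $\overline{K}$-span of $\rho_i(G)$ is all of $M_m(\overline{K})$; combined with $\chi_i(G) \subset K$ and non-degeneracy of the trace pairing $\Tr(XY)$ on $M_m(\overline{K})$, this forces $\dim_K A_i = m^2$ and $A_i \otimes_K \overline{K} \xrightarrow{\sim} M_m(\overline{K})$, so $A_i$ is a central simple $K$-algebra of dimension $m^2$. The characteristic polynomial of $\rho_i(g_0)$ splits over $K$ with $m$ distinct roots, so $K[\rho_i(g_0)] \cong K^m$ sits inside $A_i$ as a subalgebra providing $m$ pairwise orthogonal nonzero idempotents summing to $1$. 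Writing $A_i \cong M_s(D)$ with $D$ a central division $K$-algebra (so $m^2 = s^2 \dim_K D$), one notes that $A_i$ admits at most $s$ such idempotents, via its decomposition into $s$ simple summands as a left module over itself. Hence $m \leq s$, forcing $\dim_K D = 1$ and $A_i \cong M_m(K)$. By the uniqueness of simple $M_m(K)$-modules (applied to the $\overline{K}$-vector space $\overline{K}^m$ viewed as an $M_m(K) \otimes_K \overline{K}$-module), any embedding $M_m(K) \hookrightarrow M_m(\overline{K})$ is $\GL_m(\overline{K})$-conjugate to the standard one; so some $P \in \GL_m(\overline{K})$ conjugates $\rho_i$ into $\GL_m(K)$.

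\emph{Main obstacle.} Condition (3) is used essentially in two places: first to force the irreducible summands to be pairwise distinct and individually Galois-stable, and second to furnish a split maximal \'etale subalgebra $K^m \hookrightarrow A_i$ that kills the potentially nontrivial Brauer class (Schur index) in $\mathrm{Br}(K)$. Without (3), each $\rho_i$ would carry a genuine descent obstruction. The most delicate bookkeeping is verifying that $A_i$ is central simple of dimension exactly $m^2$, which relies on $\chi_i(G) \subset K$ together with the non-degeneracy of the trace pairing on $M_m(\overline{K})$.
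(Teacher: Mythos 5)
Your proof is correct, and it follows the same two-step skeleton as the paper's: first use the distinct-eigenvalue hypothesis (3) to show that $\Gal(\overline{K}/K)$ fixes each irreducible summand individually, so that each $\chi_i=\Tr\rho_i$ is $K$-valued; then descend each absolutely irreducible $\rho_i$ with $K$-valued character to $K$. The difference is in how the second step is handled: the paper simply invokes Chin's Proposition~7 (from \emph{Independence of $\ell$ in Lafforgue's theorem}) to conclude, while you re-prove that statement from scratch by forming the $K$-subalgebra $A_i$ generated by $\rho_i(G)$, checking via Burnside and the nondegenerate trace pairing that $A_i$ is central simple of dimension $m^2$, and then using the split maximal commutative subalgebra $K[\rho_i(g_0)]\cong K^m$ to kill the Schur index so that $A_i\cong M_m(K)$, with Skolem--Noether giving the final conjugation into $\GL_m(K)$. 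Also, for the Galois-stability of the summands the paper works with a single eigenvector per component (the $\alpha_i$-eigenline), whereas you work with the full eigenvalue set $S_i$; both are fine, and both argument are equivalent given that all $n$ eigenvalues of $\rho(g_0)$ are distinct. Your version is more self-contained at the cost of carrying out the central simple algebra bookkeeping explicitly, which is essentially the content of Chin's result anyway.
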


\begin{proof}
We shall reduce the proof of this lemma to the case studied by Chin.
We decompose $\overline{K}^{\oplus n}$ into irreducible representations:
\[ \overline{K}^{\oplus n} = V_1 \oplus \cdots \oplus V_r, \]
where $V_i$ corresponds to $\rho_i$.
For each $i$, we take a non-zero eigenvector $v_i \in V_i$ of $\rho(g_0)$ with
eigenvalue $\alpha_i$,
i.e.\ we have $\rho(g_0) v_i = \alpha_i v_i$.
By assumption, $\alpha_1,\ldots,\alpha_r$ are distinct elements of $K$.
Take an element $\sigma \in \Gal(\overline{K}/K)$,
and consider the map
$\sigma \rho \colon G \to \GL_n(\overline{K})$.
Since the traces of elements of $G$ are contained in $K$,
$\sigma \rho$ is equivalent to $\rho$ as a representation of $G$.
We have the irreducible decomposition with respect to the action
of $G$ via $\sigma \rho$:
\[ \overline{K}^{\oplus n} = \sigma(V_1) \oplus \cdots \oplus \sigma(V_r). \]
For each $i$, we have
\[ (\sigma \rho(g_0)) (\sigma v_i) = \sigma (\rho(g_0) v_i)
= \sigma(\alpha_i  v_i) = \alpha_i (\sigma v_i). \]
(Recall that $\sigma \alpha_i = \alpha_i$ because $\alpha_i$ is an element of $K$.)
Thus, $\sigma(V_i)$ contains an eigenvector of $\rho(g_0)$
with respect to the eigenvalue $\alpha_i$.
This implies $V_i$ is equivalent to $\sigma(V_i)$ as a representation of $G$.
Hence $\Tr \rho_i(g) \in K$ for every $g \in G$.
By \cite[Proposition 7]{Chin}, we conclude that $\rho_i$ is defined over $K$.
\end{proof}

\subsection{Geometric Frobenius elements}
\label{Subsection:GeometricFrobenius}

We take a prime number $p$.
The decomposition group at $p$ is denoted by
$\Gal(\overline{\Q}_p/\Q_p) \hookrightarrow \Gal(\overline{\Q}/\Q)$.
It is well-defined up to conjugation.

We have a natural surjection
$\Gal(\overline{\Q}_p/\Q_p) \to \Gal(\overline{\F}_p/\F_p)$.
Its kernel, denoted by $I_p$, is called the \textit{inertia group} at $p$.
Let $\Frob_p \in \Gal(\overline{\F}_p/\F_p)$ be
the \textit{geometric Frobenius element}
defined by $x \mapsto x^{1/p} \ (x \in \overline{\F}_p)$.
Let $W_{\Q_p} \subset \Gal(\overline{\Q}_p/\Q_p)$
be the \textit{Weil group} of $\Q_p$,
which is defined by the inverse image of
$\{ \Frob_p^n \mid n \in \Z \}$.

We say an $\ell$-adic representation $\rho$ is \textit{unramified} at $p$ if the image $\rho(I_p)$ is trivial.
Otherwise, we say it is \textit{ramified} at $p$.
(We never consider the notion of ``ramification at $\infty$'' in this paper.)
We choose a lift $\widetilde{\Frob}_p \in \Gal(\overline{\Q}_p/\Q_p)$,
and consider it as an element of $\Gal(\overline{\Q}/\Q)$ via
an embedding
$\Gal(\overline{\Q}_p/\Q_p) \hookrightarrow \Gal(\overline{\Q}/\Q)$.
If $\rho$ is unramified at $p$,
the image $\rho(\widetilde{\Frob}_p)$ is well-defined up to conjugation.
We denote it by $\rho(\Frob_p)$, by a slight abuse of notation.
The characteristic polynomial of $\rho(\Frob_p)$ is well-defined.

\subsection{Greni\'e's results}

It is well-known that for each $n \geq 1$, a finite set of prime numbers $S$, a prime number $\ell$, and a finite extension $E$ of $\Q_{\ell}$,
there exists a finite set of prime numbers $T_{n,S,E}$ with
$S \cap T_{n,S,E} = \emptyset$ such that for any $n$-dimensional semisimple $\ell$-adic representations
\[ \rho_1, \rho_2 \colon \Gal(\overline{\Q}/\Q) \to \GL_n(E) \]
defined over $E$ which are unramified outside $S \cup \{ \infty \}$,
if the characteristic polynomials of $\rho_1(\Frob_p), \rho_2(\Frob_p)$
are equal for every $p \in T_{n,S,E}$,
then $\rho_1, \rho_2$ are equivalent.
(See \cite[Th\'eor\`eme 3.1]{Deligne:TateShafarevich} for example.)
Such a set $T_{n,S,E}$ can be effectively calculated in principle.
However, it can be a huge set in general.

In \cite{Grenie}, Greni\'e gave a method to obtain a set $T_{n,S,E}$ satisfying the above condition.
His result can be applied to $\ell$-adic representations satisfying some restrictive conditions only.
(For the precise statement, see \cite[Theorem 3]{Grenie}.)
When it can be applied, it sometimes gives an efficient criterion to test the equivalence of $\ell$-adic representations.
The following result is proved in \cite[Section 4]{Grenie}.

\begin{thm}[Greni\'e]
\label{Theorem:Grenie}
Let $n \in \{ 3,4 \}$.
Let $E$ be a finite totally ramified extension of $\Q_2$.
Let
\[ \rho_1, \rho_2 \colon \Gal(\overline{\Q}/\Q) \to \GL_n(E) \]
be $n$-dimensional semisimple $2$-adic representations
defined over $E$ such that they are unramified outside $2$ and $\infty$,
and they satisfy
\[
\det(T - \rho_1(\Frob_p)) = \det(T - \rho_2(\Frob_p))
\]
for $p \in \{ 5,7,11,17,23,31 \}$.
Then $\rho_1, \rho_2$ are equivalent.
\end{thm}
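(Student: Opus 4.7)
The plan is to apply the Faltings-Serre / Livn\'e method, which is specifically designed to determine a semisimple $\ell$-adic representation from a finite amount of trace data by exploiting small ramification and a small residue field. As a first step, by the Brauer-Nesbitt theorem it suffices to prove $\Tr \rho_1(\sigma) = \Tr \rho_2(\sigma)$ for every $\sigma \in \Gal(\overline{\Q}/\Q)$; equivalently, by Chebotarev density, at every Frobenius $\Frob_p$ with $p \neq 2$. The goal is then to reduce this infinite comparison to the finite one in the hypothesis.

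The key structural input is that $E/\Q_2$ is totally ramified, so the residue field of $\mathcal{O}_E$ is $\F_2$ and the mod-$\mathfrak{m}_E$ reductions $\bar\rho_1, \bar\rho_2$ take values in $\GL_n(\F_2)$, a finite group of order $168$ for $n=3$ and $20160$ for $n=4$. First I would verify that the given trace equalities already force $\bar\rho_1^{\mathrm{ss}} \cong \bar\rho_2^{\mathrm{ss}}$, by enumerating the semisimple conjugacy classes in $\GL_n(\F_2)$ and their characteristic polynomials and checking that the six primes pin them down. Then I would run the Faltings-Serre deformation argument: letting $M$ be the image of $\rho_1 \oplus \rho_2$ in $\GL_n(\mathcal{O}_E) \times \GL_n(\mathcal{O}_E)$ and $t(m) := \Tr(m_1) - \Tr(m_2)$, an induction on the $\mathfrak{m}_E$-adic filtration of $M$ shows that $t$ vanishes identically on $M$ provided it vanishes on a prescribed finite set of test elements, which via Chebotarev correspond to Frobenius classes in certain controlled finite $2$-extensions of $\Q$ unramified outside $S = \{2, \infty\}$.

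The main obstacle, and the substance of Greni\'e's contribution, is the explicit enumeration of these finite extensions together with the verification that the Frobenius classes at $\{5, 7, 11, 17, 23, 31\}$ hit every class that could detect a putative non-isomorphism. The input is class-field-theoretic: one has to bound the $2$-class groups and unit groups of the small number fields unramified outside $\{2, \infty\}$, and use these bounds to list the finitely many pro-$2$ quotients of $\Gal(\Q_S/\Q)$ that can realize an obstruction. The hypotheses $n \in \{3, 4\}$ and $E/\Q_2$ totally ramified are crucial precisely because they keep the residual image in $\GL_n(\F_2)$, and hence keep the relevant tower of extensions small enough to enumerate in practice. Once this tower is tabulated, checking Frobenius coverage at the six explicit primes is a direct finite computation; the real difficulty lies in organizing the enumeration so that the same six primes work uniformly for every case that can arise in dimensions three and four.
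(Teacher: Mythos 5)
The paper offers no proof of its own here---it simply cites Greni\'e's paper (\cite[Section 4]{Grenie})---so there is no in-text argument to compare against. That said, your sketch correctly identifies the Faltings--Serre--Livn\'e machinery that Greni\'e's cited proof in fact uses: reduce to trace comparison via Brauer--Nesbitt and Chebotarev, exploit that the residue field of a totally ramified $E/\Q_2$ is $\F_2$ so the residual image lands in the finite group $\GL_n(\F_2)$, run the $\mathfrak{m}_E$-adic deformation induction, and use explicit class field theory for the maximal pro-$2$ extension of $\Q$ unramified outside $\{2,\infty\}$ (in essence the tower inside $\Q(\zeta_{2^\infty})$ together with the small nonabelian $2$-extensions it controls) to reduce the Chebotarev test to the six listed primes. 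You also correctly flag that the real content---the substance that cannot be waved through---is Greni\'e's explicit enumeration of the obstruction fields and the verification that $\{5,7,11,17,23,31\}$ cover all the needed Frobenius classes uniformly in dimensions $3$ and $4$. So your outline is consistent with the cited argument, but as stated it is a framework plus an acknowledged black box rather than a self-contained proof; to make it complete you would need to carry out (or re-derive) that finite enumeration, which is exactly what \cite[Section 4]{Grenie} does.
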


\begin{proof}
See \cite[Section 4]{Grenie}.
\end{proof}

\section{Three-dimensional $\ell$-adic representations
constructed by van Geemen-Top}
\label{Section:GaloisRepresentations:vanGeemenTop}

\subsection{van Geemen-Top's construction}
\label{Subsection:vanGeemenTopConstruction}

In \cite{vanGeemenTop:Invent}, van Geemen-Top studied
the cohomology of the projective smooth surface over $\Q$
with affine model
\[ t^2 = xy(x^2 - 1)(y^2 - 1)(x^2 - y^2 + 2 xy). \]
This surface was denoted by $S_2$ in \cite[Section 3.2]{vanGeemenTop:Invent}.
It admits an automorphism of order $4$, given by
\[ \phi \colon S_2 \to S_2,\quad (x,y,t) \mapsto (y,-x,t). \]

Let $\ell$ be a prime number.
Let $v_{\ell}$ be the finite place of $\Q(\sqrt{-1})$ above $\ell$
given by the fixed isomorphism
$\iota_{\ell} \colon \overline{\Q}_{\ell} \cong \C$.
The transcendental part of the $\ell$-adic cohomology is denoted by
\[ T_{\Q_{\ell}} \subset H^2_{\text{\'et}}((S_2)_{\overline{\Q}}, \Q_{\ell}). \]
It is defined by the orthogonal complement of the image of the Chern class map
\[
\mathrm{cl}^1_{\text{\'et}} \colon \NS((S_2)_{\overline{\Q}}) \otimes_{\Z} \Q_{\ell}(-1)
\to H^2_{\text{\'et}}((S_2)_{\overline{\Q}}, \Q_{\ell})
\]
with respect to the cup product
\[
\cup \colon H^2_{\text{\'et}}((S_2)_{\overline{\Q}}, \Q_{\ell})
\times H^2_{\text{\'et}}((S_2)_{\overline{\Q}}, \Q_{\ell})
\to H^4_{\text{\'et}}((S_2)_{\overline{\Q}}, \Q_{\ell}) \cong \Q_{\ell}(-2).
\]
In \cite[Section 3.3]{vanGeemenTop:Invent},
van Geemen-Top proved $\dim_{\Q_{\ell}} T_{\Q_{\ell}} = 6$.

Let
$\phi^{\ast} \colon T_{\Q_{\ell}} \to T_{\Q_{\ell}}$
be the pullback by the automorphism $\phi$.
We have $\phi^{\ast} \circ \phi^{\ast} = -1$ on $T_{\Q_{\ell}}$. (See \cite[Section 3.4]{vanGeemenTop:Invent}.)
We decompose the $\Q(\sqrt{-1})_{v_{\ell}}$-vector space
$T_{\Q_{\ell}} \otimes_{\Q_{\ell}} \Q(\sqrt{-1})_{v_{\ell}}$
into two eigenspaces
\[
T_{\Q_{\ell}} \otimes_{\Q_{\ell}} \Q(\sqrt{-1})_{v_{\ell}}
= V_{+} \oplus V_{-},
\]
where $V_{+}$ (resp.\ $V_{-}$) is the eigenspace of $\phi^{\ast}$ with respect to the eigenvalue $\sqrt{-1}$ (resp.\ $-\sqrt{-1}$).
Since $\phi^{\ast}$ commutes with the action of $\Gal(\overline{\Q}/\Q)$,
we have a natural action of $\Gal(\overline{\Q}/\Q)$ on $V_{+}$ and $V_{-}$.

We consider the action of $\Gal(\overline{\Q}/\Q)$
on $V_{+}$ twisted by the quadratic character
\[ \chi_{-2} \colon \Gal(\overline{\Q}/\Q) \to \{ \pm 1 \} \]
corresponding to $\Q(\sqrt{-2})/\Q$.
The $\Q(\sqrt{-1})_{v_{\ell}}$-vector space $V_{+}$
with the twisted action by $\chi_{-2}$ is denoted by
$V_{+} \otimes \chi_{-2}$.
It gives a three-dimensional $\ell$-adic representation
defined over $\Q(\sqrt{-1})_{v_{\ell}}$.
We denote it by
\[
 \rho_{\mathrm{vGT},\ell} \colon \Gal(\overline{\Q}/\Q) \to
   \GL(V_{+} \otimes \chi_{-2}) \cong \GL_3(\Q(\sqrt{-1})_{v_{\ell}}).
\]

\begin{rem}
\label{Remark:ZariskiDensity}
The $\ell$-adic representation
$\rho_{\mathrm{vGT},\ell}$
has Zariski dense image in $\GL_3(\overline{\Q}_{\ell})$.
(See \cite[p.~7, Section 2]{JohanssonThorne}.)
In particular, it is not odd in the sense of \cite[Definition 2.3]{JohanssonThorne}.
\end{rem}

\begin{rem}
\label{Remark:ConjugateGaloisRepresentation}
We can also consider the action of $\Gal(\overline{\Q}/\Q)$
on $V_{-}$ twisted by $\chi_{-2}$.
We denote it by
\[
 \rho_{\mathrm{vGT},\ell}^{c} \colon \Gal(\overline{\Q}/\Q) \to
   \GL(V_{-} \otimes \chi_{-2}) \cong \GL_3(\Q(\sqrt{-1})_{v_{\ell}}).
\]
The cup product on
$H^2_{\text{\'et}}((S_2)_{\overline{\Q}}, \Q_{\ell})$
induces a $\Gal(\overline{\Q}/\Q)$-invariant perfect pairing
\[
\cup \colon T_{\Q_{\ell}} \times T_{\Q_{\ell}} \to \Q_{\ell}(-2).
\]
It is easy to see that the restrictions of $\cup$ to $V_{+},V_{-}$
are trivial.
Hence $\rho_{\mathrm{vGT},\ell}^{c}$ is equivalent to
the contragredient representation
of $\rho_{\mathrm{vGT},\ell}(2)$,
where $(2)$ denotes the Tate twist.
The construction of $\rho_{\mathrm{vGT},\ell}$ depends on
the choice of $\sqrt{-1} \in \C$ and $\iota_{\ell} \colon \overline{\Q}_{\ell} \cong \C$.
If we replace $\sqrt{-1} \in \C$ by its negative,
the $\ell$-adic representation $\rho_{\mathrm{vGT},\ell}$
is replaced by $\rho_{\mathrm{vGT},\ell}^{c}$.
\end{rem}

\subsection{Properties of $\rho_{\mathrm{vGT},\ell}$ at $p$ ($p \neq \ell$)}

\begin{prop}
\label{Proposition:vanGeemenTopGalois}
\begin{enumerate}
\item $\rho_{\mathrm{vGT},\ell}$ is absolutely irreducible.
It is unramified outside $2$ and $\infty$.

\item For any $p, \ell$ with $p \neq \ell$ and $p \neq 2$,
the characteristic polynomial of the geometric Frobenius element at $p$ is written as
\[
\iota_{\ell}(\det(T - \rho_{\mathrm{vGT},\ell}(\Frob_p) )) = T^3 - b_p T^2 + p (b_p)^c T - p^3
\]
for some $b_p \in \Q(\sqrt{-1})$.
(Here $(b_p)^c$ is the complex conjugate of $b_p$.)

\item The element $b_p \in \Q(\sqrt{-1})$ in (2)
is independent of $\ell \neq p$ and $\iota_{\ell}$.

\item For any $\ell \neq 2$ and any $\sigma \in W_{\Q_2}$,
the characteristic polynomial
\[ \iota_{\ell}(\det(T - \rho_{\mathrm{vGT},\ell}(\sigma) )) \]
has coefficients in $\Q(\sqrt{-1})$.
It is independent of $\ell \neq 2$ and $\iota_{\ell}$.
\end{enumerate}
\end{prop}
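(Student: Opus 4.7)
The plan is to derive all four parts from cup product duality on $T_{\Q_{\ell}}$, the Weil conjectures, Deligne's $\ell$-independence for cohomology of smooth proper varieties, Ochiai's $\ell$-independence at primes of bad reduction, and van Geemen--Top's explicit Frobenius data at small primes. For (1), direct inspection of the defining equation shows $S_{2}$ has good reduction away from $2$, so smooth and proper base change yields that $H^{2}_{\text{\'et}}((S_{2})_{\overline{\Q}}, \Q_{\ell})$ is unramified at every $p \notin \{2, \ell\}$. The Galois-stable subquotients $T_{\Q_{\ell}}$ and $V_{\pm}$ (the latter because $\phi^{*}$ commutes with Galois) inherit this, and the twist by $\chi_{-2}$ preserves unramifiedness away from $2$. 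For absolute irreducibility, any nontrivial $1$-dimensional subquotient would yield a character $\chi$ unramified outside $2$ with $|\chi(\Frob_{p})| = p$, so $\chi \omega$ would be a finite-order Dirichlet character of $2$-power conductor; comparison with the Frobenius polynomials computed in \cite{vanGeemenTop:Invent} at small primes rules out every such candidate.

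For (2), by Remark \ref{Remark:ConjugateGaloisRepresentation} the cup product vanishes on $V_{+} \times V_{+}$ and $V_{-} \times V_{-}$, and dimension matching then forces a Galois-equivariant perfect pairing $V_{+} \times V_{-} \to \Q_{\ell}(-2)$, so $V_{-} \cong V_{+}^{\vee} \otimes \Q_{\ell}(-2)$. Separately, interchanging the two square roots of $-1$ in $\overline{\Q}_{\ell}$ swaps $V_{+}$ and $V_{-}$, so the characteristic polynomial of $\rho_{\mathrm{vGT},\ell}^{c}(\Frob_{p})$ is the complex conjugate under $\iota_{\ell}$ of that of $\rho_{\mathrm{vGT},\ell}(\Frob_{p})$ (the $\chi_{-2}$-twists cancel since $\chi_{-2}$ is real-valued). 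Equating the two identifications and writing the characteristic polynomial as $T^{3} - B_{p} T^{2} + C_{p} T - D_{p}$, one obtains
\[
\bar{D}_{p} = p^{6}/D_{p}, \qquad \bar{B}_{p} = p^{2} C_{p}/D_{p}.
\]
The first gives $|D_{p}|^{2} = p^{6}$. One then argues that $\det \rho_{\mathrm{vGT},\ell} \cdot \omega^{3}$ is a finite-order Galois character valued in $\Q(\sqrt{-1})^{\times}$, hence $\mu_{4}$-valued and unramified outside $2$ (using that $\det V_+$ is locally de Rham at $\ell$ and that $\Q(\sqrt{-1})$ contains only the fourth roots of unity), and uses van Geemen--Top's explicit computation at a small prime (e.g.\ $D_{3} = 27$) together with the Chebotarev density theorem to force it to be trivial. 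This yields $D_{p} = p^{3}$ for every $p \neq 2, \ell$, whence $C_{p} = p \bar{B}_{p}$ and the asserted form follow.

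For (3), $\ell$- and $\iota_{\ell}$-independence follow from Deligne's $\ell$-independence of Frobenius on $\ell$-adic cohomology of smooth proper $\Q$-varieties. Concretely, both $\Tr(\Frob_{p} \mid T_{\Q_{\ell}}) = 2 \Re b_{p}$ and $\Tr(\phi^{*} \Frob_{p} \mid T_{\Q_{\ell}})$ (which via $\iota_{\ell}$ evaluates to $-2 \Im b_{p}$) are rationals independent of $\ell$, and together they determine $b_{p} \in \Q(\sqrt{-1})$; replacing $\iota_{\ell}$ by $c \circ \iota_{\ell}$ swaps $V_{\pm}$ and conjugates $b_{p}$, but the change of embedding compensates, so the element of $\Q(\sqrt{-1}) \subset \C$ is unchanged. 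Part (4) follows by the same reasoning applied to Weil-group elements at $p = 2$, invoking Ochiai's $\ell$-independence \cite{Ochiai} for $\ell$-adic cohomology of surfaces over $\Q_{2}$. The hardest point is pinning down $D_{p} = p^{3}$ in (2): the formal cup-product/complex-conjugation argument only determines $D_{p}$ up to a possibly nontrivial $\mu_{4}$-valued character unramified outside $2$, and killing this twist genuinely requires an external input, here the explicit van Geemen--Top computation at a small prime.
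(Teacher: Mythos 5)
Your overall strategy is sound, and parts (2) and (3) track the paper's route (the paper itself defers (1) and (2) to \cite[Section 3.7]{vanGeemenTop:Invent} and derives (3) from the trace decomposition $b_p = \tfrac12\Tr(\Frob_p;T_{\Q_\ell}) - \tfrac{\sqrt{-1}}{2}\Tr(\Frob_p\circ\phi^*;T_{\Q_\ell})$). Your careful handling of the normalization issue $D_p = p^3$ (not merely $|D_p| = p^3$), and your insistence that this genuinely requires an external input such as the explicit value at a small prime, is correct and is exactly the subtle point in (2).

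However, there is a real gap in your treatment of (4). You invoke ``Ochiai's $\ell$-independence for $\ell$-adic cohomology of surfaces over $\Q_2$'' for both $\Tr(\sigma;T_{\Q_\ell})$ and $\Tr(\sigma\circ\phi^*;T_{\Q_\ell})$, but Ochiai's theorems \cite{Ochiai} concern $\sum_i(-1)^i\Tr(\sigma;H^i_{\text{\'et}}(X_{\overline K},\Q_\ell))$ for $\sigma\in W_K$ alone; they do not directly give $\ell$-independence of $\Tr(\sigma\circ f^*;H^i)$ for a nontrivial automorphism $f$. The paper proves (4) by citing Saito's $\ell$-independence results \cite[Theorem 0.1]{Saito:l-independence}, which do treat correspondences, and separately shows in Lemma~\ref{Lemma:Independence} that one can instead derive the needed statement from Ochiai via an unramified-twist trick (replacing $X$ by its twist $X_f$ by the finite-order automorphism). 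Your argument silently assumes the automorphism-equivariant version of Ochiai, which is not what Ochiai proves; you should either cite Saito or carry out the twisting argument.

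A secondary, smaller imprecision: your absolute-irreducibility argument asserts that a $1$-dimensional subquotient $\chi$ with $|\chi(\Frob_p)|=p$ forces $\chi\omega$ to be a finite-order Dirichlet character of $2$-power conductor, but you elide the step needed to pass from the archimedean constraint to finite order. Since $\chi$ is a subquotient of a de Rham representation it is de Rham, hence of the form $\chi_0\omega^{-k}$ with $\chi_0$ finite order; the weight constraint then pins $k=1$ and the ramification constraint pins the conductor. This step should be spelled out, and the paper sidesteps it entirely by citing \cite[Remark 3.12]{vanGeemenTop:Invent} and \cite[Section 4.1]{Grenie} for irreducibility.
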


\begin{proof}
(1), (2) are proved in \cite[Section 3.7]{vanGeemenTop:Invent}
except for the absolute irreducibility.
The absolute irreducibility of $\rho_{\mathrm{vGT},\ell}$
is proved in \cite[Remark 3.12]{vanGeemenTop:Invent}
and the last paragraph of \cite[Section 4.1]{Grenie}.

(3) As explained in \cite[Section 3.8]{vanGeemenTop:Invent},
we have
\[
b_p = \iota_{\ell}(\Tr \rho_{\mathrm{vGT},\ell}(\Frob_p)) =
\frac{1}{2} \iota_{\ell}(\Tr(\Frob_p; T_{\Q_{\ell}}))
-
\frac{\sqrt{-1}}{2} \iota_{\ell}(\Tr(\Frob_p \circ \phi^{\ast}; T_{\Q_{\ell}})).
\]
It is enough to show that
$\Tr(\Frob_p; T_{\Q_{\ell}})$,
$\Tr(\Frob_p \circ \phi^{\ast}; T_{\Q_{\ell}})$
are rational numbers which are independent of $\ell \neq p$.
This follows from
\cite[Proposition 3.6]{vanGeemenTop:Invent}
and
\cite[Proposition 3.9]{vanGeemenTop:Invent}.
(In fact,
$\Tr(\Frob_p; T_{\Q_{\ell}})$,
$\Tr(\Frob_p \circ \phi^{\ast}; T_{\Q_{\ell}})$
are integers.)

(4)
By the same reason as (3), it is enough to show that
$\Tr(\sigma; T_{\Q_{\ell}})$,
$\Tr(\sigma \circ \phi^{\ast}; T_{\Q_{\ell}})$
are rational numbers which are independent of $\ell \neq 2$.
To show this, it is enough to show that
$\Tr(\sigma; H^{2}_{\text{\'et}}((S_2)_{\overline{\Q}_2}, \Q_{\ell}))$,
$\Tr(\sigma \circ \phi^{\ast}; H^{2}_{\text{\'et}}((S_2)_{\overline{\Q}_2}, \Q_{\ell}))$
are rational numbers which are independent of $\ell \neq 2$.
It follows from Saito's $\ell$-independence results.
(See \cite[Theorem 0.1]{Saito:l-independence}.)
\end{proof}

\begin{rem}
For any $\ell \neq 2$,
the $\ell$-adic representation
$\rho_{\mathrm{vGT},\ell}$ is ramified at $2$.
(See Remark \ref{Remark:Ramifiedat2}.)
\end{rem}

\begin{rem}
To prove Proposition \ref{Proposition:vanGeemenTopGalois},
we do not need the full strength of Saito's results.
In fact, we can prove Proposition \ref{Proposition:vanGeemenTopGalois}
using Ochiai's results and a trick of unramified twist.
(See Lemma \ref{Lemma:Independence} below.)
\end{rem}

\subsection{Properties of $\rho_{\mathrm{vGT},p}$ at $p$}

Here we consider the case $p = \ell$.
We shall study the properties of the $p$-adic representation $\rho_{\mathrm{vGT},p}$ at $p$.

The following is a slight generalization of Ochiai's $\ell$-independence results \cite{Ochiai}.

\begin{lem}
\label{Lemma:Independence}
Let $p$ be a prime number, and $K$ a finite extension of $\Q_p$.
Let $X$ be a proper smooth variety over $K$,
and $f \in \Aut(X)$ an automorphism of finite order defined over $K$.
Let $\sigma \in W_{K}$ be an element in the Weil group of $K$.

\begin{enumerate}
\item For $\ell \neq p$, the alternating sum of the traces
\[
\sum_{i = 0}^{2 \dim X} (-1)^i \Tr(\sigma \circ f^{\ast}; H^{i}_{\text{\'et}}(X_{\overline{K}}, \Q_{\ell}))
\]
are rational numbers which are independent of
$\ell (\neq p)$.

\item Moreover, it is equal to
\[
\sum_{i = 0}^{2 \dim X} (-1)^i \Tr(\sigma \circ f^{\ast}; \mathrm{WD}(H^{i}_{\text{\'et}}(X_{\overline{K}}, \Q_p))).
\]
Here $\mathrm{WD}(H^{i}_{\text{\'et}}(X_{\overline{K}}, \Q_p))$
is the Weil-Deligne representation associated with
the $p$-adic representation
$H^{i}_{\text{\'et}}(X_{\overline{K}}, \Q_p)$
of $\Gal(\overline{K}/K)$.
(See \cite[Section 1]{TaylorYoshida} for details.)
\end{enumerate}
\end{lem}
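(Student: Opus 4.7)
The plan is to reduce the statement to Ochiai's $\ell$-independence theorem for proper smooth varieties (which does not involve the automorphism $f$) by means of an unramified twist of $X$.

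First, I would set up the twist as follows. Let $n$ be the order of $f$, and let $L/K$ be the unramified extension of degree $n$, with $\Gal(L/K) = \langle \tau \rangle$ generated by the Frobenius lift. For each integer $a$, the assignment $c_a(\tau^j) := f^{aj}$ defines a $1$-cocycle $c_a \colon \Gal(L/K) \to \Aut_K(X)$ (well-defined since $f^n = 1$); let $Y_a$ denote the corresponding twisted $K$-form of $X$. Then $Y_a$ is proper smooth over $K$ and canonically isomorphic to $X$ over $L$, and under the induced identification on $\ell$-adic cohomology, the natural action of $\sigma \in W_K$ on $H^{i}_{\text{\'et}}((Y_a)_{\overline{K}}, \Q_\ell)$ corresponds to $\sigma \circ (f^*)^{a \cdot v_K(\sigma)}$ on $H^{i}_{\text{\'et}}(X_{\overline{K}}, \Q_\ell)$.

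For part (1), I would apply Ochiai's theorem to each $Y_a$ to obtain, for every integer $a$,
\[
S_a(\sigma) := \sum_i (-1)^i \Tr\bigl(\sigma \circ (f^*)^{a \cdot v_K(\sigma)};\, H^{i}_{\text{\'et}}(X_{\overline{K}}, \Q_\ell)\bigr) \in \Q, \quad \text{independent of } \ell \neq p.
\]
For $\sigma \in W_K$ with $\gcd(v_K(\sigma), n) = 1$, I would choose $a$ so that $a \cdot v_K(\sigma) \equiv 1 \pmod n$; since $f^*$ has order $n$, this forces $(f^*)^{a \cdot v_K(\sigma)} = f^*$, so $S_a(\sigma)$ equals the target alternating sum. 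This handles the dense subset $D := \{\sigma \in W_K : \gcd(v_K(\sigma), n) = 1\}$ of $W_K$. To extend to $\sigma \in W_K \setminus D$, in particular to $\sigma \in I_K$ where the unramified twist is insensitive, I would use that $\sigma \mapsto \sum_i (-1)^i \Tr(\sigma \circ f^*; H^i)$ is the character of a virtual continuous semisimple $W_K$-representation with algebraic eigenvalues; two such systems of characters agreeing on $D$ agree on all of $W_K$ by standard compatible-system arguments (continuity in the $\ell$-adic topology plus semisimplicity and algebraicity of traces).

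For part (2), I would apply the same twist construction to the $p$-adic étale cohomology of $Y_a$ and invoke the $p$-adic counterpart of Ochiai's theorem (due to T.~Saito), which asserts
\[
\sum_i (-1)^i \Tr\bigl(\sigma;\, H^{i}_{\text{\'et}}((Y_a)_{\overline{K}}, \Q_\ell)\bigr) = \sum_i (-1)^i \Tr\bigl(\sigma;\, \mathrm{WD}(H^{i}_{\text{\'et}}((Y_a)_{\overline{K}}, \Q_p))\bigr)
\]
for every $\ell \neq p$ and every $\sigma \in W_K$. Translating via the identification $(Y_a)_L \simeq X_L$ and repeating the selection-of-$a$ argument of part (1) yields the equality asserted in (2).

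The step I expect to be the main obstacle is the extension of $\ell$-independence from the dense subset $D$ to all of $W_K$ in part (1): since the unramified twist does not see the inertia action at all, the twist trick does not directly produce any power of $f^*$ for $\sigma \in I_K$. Rigorously carrying out the extension requires some delicate use of the structural theory of continuous Weil-group representations with algebraic characters, in order to propagate the coincidence of trace values from $D$ to inertia and beyond.
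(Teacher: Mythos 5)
Your core idea --- replacing $\sigma \circ f^*$ on $X$ by $\sigma$ on an unramified twist $Y$ of $X$, and then invoking Ochiai's $\ell$-independence for a proper smooth variety without the automorphism --- is exactly the idea the paper uses, and the computation identifying $\sigma$ on $H^i((Y_a)_{\overline{K}},\Q_\ell)$ with $\sigma\circ (f^*)^{a\,n(\sigma)}$ on $H^i(X_{\overline{K}},\Q_\ell)$ is correct. However, the step you yourself flag as the ``main obstacle'' is, as written, a genuine gap. The set $D=\{\sigma\in W_K : \gcd(n(\sigma),n)=1\}$ is a union of cosets of the inertia subgroup $I_K$, hence it is closed and \emph{not} dense in $W_K$ (the Weil group topology is discrete in the Frobenius direction). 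Continuity therefore gives you nothing about $\sigma\in I_K$; semisimplicity and algebraicity of traces do not rescue the argument either, since trace agreement on a generating subset does not propagate to the whole group. So ``standard compatible-system arguments'' do not close the gap as stated.

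The paper closes it differently, and more carefully, in two stages. First it reduces from $n(\sigma)\ge 1$ to $n(\sigma)=1$ simply by base-changing $K$ to its unramified extension of degree $n(\sigma)$; this renders the auxiliary parameter $a$ and the whole family $Y_a$ unnecessary --- one only needs the single twist corresponding to $a=1$. Second, to pass from $n(\sigma)\ge 1$ to arbitrary $\sigma$ (including inertia), it replaces the $\ell$-adic cohomology by its associated Weil--Deligne representation, on which inertia acts through a finite quotient; then there is an $r\ge 1$ such that $\tau^r$ (for a fixed $\tau$ with $n(\tau)=1$) commutes with every operator in sight. One then observes that $s\mapsto t(\tau^{rs}\sigma,\star)$ is a finite linear combination of terms $\lambda^s P_\lambda(s)$ with $P_\lambda$ polynomial; such a function satisfies a linear recurrence, so if two of them agree for all $s\gg 1$ they agree for all $s$, including $s=0$. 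This linear-algebra interpolation argument (cf.\ Lemma~1 of Saito's \emph{Modular forms and $p$-adic Hodge theory}) is the piece missing from your proposal.

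Two smaller points. The $p$-adic statement for a proper smooth variety (without the automorphism) that you invoke is Ochiai's Theorem~D, not a result of T.~Saito; indeed the whole reason for the twist trick is that a $p$-adic analogue of Saito's $\ell$-independence result \emph{with} an automorphism is not available. And ``$t(\sigma,\ell)=t(\sigma,p)$ for all $\ell\ne p$'' is the single statement the paper proves; it simultaneously yields rationality, $\ell$-independence, and the $p$-adic comparison, so parts~(1) and~(2) need not be treated separately as you suggest.
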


\begin{proof}
If $f$ is the identity, these assertions follow from
Ochiai's results \cite[Theorem B, Theorem D]{Ochiai}.

If $f$ is not the identity, (1) follows from Saito's results \cite[Theorem 0.1]{Saito:l-independence}.
If we could prove Saito's results in the $p$-adic case, (2) could follow.
However, since such results are not currently available,
we shall use a trick of unramified twist to deduce
both (1) and (2) from Ochiai's results.
(The same trick was used by Deligne-Lusztig for varieties over a finite field.
See \cite[p.~119]{DeligneLusztig}.)

Let $q$ be the number of elements in the residue field of $K$.
For $\sigma \in W_{K}$,
its image in $\Gal(\overline{\F}_q/\F_q)$
is written as $\Frob_q^n$ for some integer $n$.
We put $n(\sigma) := n$.
(Here $\Frob_q$ is the geometric Frobenius element
defined by $x \mapsto x^{1/q} \ (x \in \overline{\F}_q)$.)

For $\ell \neq p$ and $\sigma \in W_K$, we put
\begin{align*}
t(\sigma,\ell) &:= \sum_{i = 0}^{2 \dim X} (-1)^i \Tr(\sigma \circ f^{\ast}; H^{i}_{\text{\'et}}(X_{\overline{K}}, \Q_{\ell})), \\
t(\sigma,p) &:= \sum_{i = 0}^{2 \dim X} (-1)^i \Tr(\sigma \circ f^{\ast}; \mathrm{WD}(H^{i}_{\text{\'et}}(X_{\overline{K}}, \Q_p))).
\end{align*}

To prove (1) and (2),
it is enough to prove $t(\sigma,\ell) = t(\sigma,p)$
for every $\ell \neq p$ and every $\sigma \in W_K$.
In the following argument, we fix $\ell \neq p$.

We claim that if $t(\sigma,\ell) = t(\sigma,p)$
holds for every $\sigma \in W_K$ with $n(\sigma) \geq 1$,
then $t(\sigma,\ell) = t(\sigma,p)$
holds for every $\sigma \in W_K$.
It is presumably well-known.
(See \cite[Lemma 1]{Saito:modularform} for example.)
We briefly give a proof. 
We may replace $H^{i}_{\text{\'et}}(X_{\overline{K}}, \Q_{\ell})$
by the Weil-Deligne representation $\mathrm{WD}(H^{i}_{\text{\'et}}(X_{\overline{K}}, \Q_{\ell}))$.
Then the action of the inertia group on
$\mathrm{WD}(H^{i}_{\text{\'et}}(X_{\overline{K}}, \Q_{\ell}))$
(resp.\ $\mathrm{WD}(H^{i}_{\text{\'et}}(X_{\overline{K}}, \Q_p))$)
factors through a finite quotient.
Thus, there is a positive integer $r \geq 1$
such that, for every $i$, the actions of $\tau^r$ on
$\mathrm{WD}(H^{i}_{\text{\'et}}(X_{\overline{K}}, \Q_{\ell}))$
and
$\mathrm{WD}(H^{i}_{\text{\'et}}(X_{\overline{K}}, \Q_p))$
commute with the actions of every element of $W_K$.
Take an element $\tau \in W_{K}$ with $n(\tau) = 1$.
Let $\sigma \in W_K$ be an element.
It is an exercise in linear algebra that
if $t(\tau^{rs} \circ \sigma, \ell) = t(\tau^{rs} \circ \sigma, p)$
holds for every sufficiently large integer $s \gg 1$,
then it also holds for every integer $s$, including $s = 0$.
From this, the claim follows.

Further, replacing $K$ by a finite unramified extension of it,
it is enough to prove $t(\sigma,\ell) = t(\sigma,p)$
for every $\sigma \in W_K$ with $n(\sigma) = 1$.

Let $\sigma \in W_K$ be an element with $n(\sigma) = 1$.
Let $d$ be the order of the automorphism $f$.
Let $K_d/K$ be the unramified extension of degree $d$.
Taking the twist of $X$ with respect to $f$ and $\Gal(K_d/K)$,
we have a proper smooth variety $X_f$ over $K$.
It is isomorphic to $X$ over $K_d$.
Since $\sigma$ is a lift of $\Frob_q$,
the action of
$\mathrm{id}_{X_f} \otimes \sigma^{\ast}$ on $X_f \otimes_{K} K_d$
is identified with the action of
$f \otimes \sigma^{\ast}$ on $X \otimes_{K} K_d$.
Hence we have
\[
   \Tr(\sigma; H^{i}_{\text{\'et}}((X_f)_{\overline{K}}, \Q_{\ell}))
 = \Tr(\sigma \circ f^{\ast}; H^{i}_{\text{\'et}}(X_{\overline{K}}, \Q_{\ell})).
\]
Similarly, we also have
\[
   \Tr(\sigma; \mathrm{WD}(H^{i}_{\text{\'et}}((X_f)_{\overline{K}}, \Q_p)))
 = \Tr(\sigma \circ f^{\ast}; \mathrm{WD}(H^{i}_{\text{\'et}}(X_{\overline{K}}, \Q_p))).
\]
Therefore, it is enough to prove
\[
\sum_{i = 0}^{2 \dim X} (-1)^i \Tr(\sigma; H^{i}_{\text{\'et}}((X_f)_{\overline{K}}, \Q_{\ell}))
= \sum_{i = 0}^{2 \dim X} (-1)^i \Tr(\sigma; \mathrm{WD}(H^{i}_{\text{\'et}}((X_f)_{\overline{K}}, \Q_p))).
\]
It follows from \cite[Theorem D]{Ochiai}.
\end{proof}

We shall use Lemma \ref{Lemma:Independence}
to compare the properties of the $\ell$-adic representation
$\rho_{\mathrm{vGT},\ell}$
with the properties of the $p$-adic representation
$\rho_{\mathrm{vGT},p}$.

The $p$-adic representation $\rho_{\mathrm{vGT},p}$ is de Rham at $p$
because it is, up to a quadratic twist,
embedded as a direct summand into the $p$-adic cohomology of the projective smooth surface $S_2$.
Let
$\mathrm{WD}(\rho_{\mathrm{vGT},p}|_{\Gal(\overline{\Q}_p/\Q_p)})$
be the Weil-Deligne representation associated with it.
(See \cite[Section 1]{TaylorYoshida}.)

\begin{prop}
\label{Proposition:vanGeemenTopGalois:l=p}
\begin{enumerate}
\item Assume $p \neq 2$.
The $p$-adic representation $\rho_{\mathrm{vGT},p}$ is crystalline at $p$,
and we have
\[
\iota_p(\det(T - \Frob_p; \mathrm{WD}(\rho_{\mathrm{vGT},p}|_{\Gal(\overline{\Q}_p/\Q_p)}))) = T^3 - b_p T^2 + p (b_p)^c T - p^3.
\]
Here $b_p \in \Q(\sqrt{-1})$ is the same element as
Proposition \ref{Proposition:vanGeemenTopGalois} (2).

\item For any $\ell \neq 2$ and any $\sigma \in W_{\Q_2}$,
we have the equality of the characteristic polynomials
\[ \iota_{2}(\det(T - \sigma; \mathrm{WD}(\rho_{\mathrm{vGT},2}|_{\Gal(\overline{\Q}_2/\Q_2)}) ))
= \iota_{\ell}(\det(T - \rho_{\mathrm{vGT},\ell}(\sigma) )). \]
\end{enumerate}
\end{prop}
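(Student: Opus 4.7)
The plan is to derive (1) from the crystalline comparison theorem combined with good reduction of $S_2$ at odd primes, and to derive (2) from Lemma \ref{Lemma:Independence} applied to $S_2$ equipped with the automorphism $\phi$.

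For (1), I would first argue that $S_2$ has good reduction at every odd prime $p$. The branch divisor of the defining double cover has transverse intersections in characteristic $\neq 2$, so an integral model over $\Z[1/2]$ becomes smooth after resolving the resulting rational double points. Given good reduction, Faltings--Tsuji yields that $H^2_{\text{\'et}}((S_2)_{\overline{\Q}_p}, \Q_p)$ is crystalline, so its Galois-stable direct summand $T_{\Q_p}$ is crystalline. Since $\phi$ is defined over $\Q$, the projector $P_+ = \frac{1}{2}(1 - \sqrt{-1}\,\phi^*)$ is Galois-equivariant and realizes $V_+$ as a crystalline direct summand of $T_{\Q_p} \otimes_{\Q_p} \Q(\sqrt{-1})_{v_p}$; the quadratic character $\chi_{-2}$ is unramified (hence crystalline) at odd $p$, so $\rho_{\mathrm{vGT},p}$ is crystalline. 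The characteristic polynomial assertion then follows from the standard fact that crystalline Frobenius and $\ell$-adic $\Frob_p$ (for any $\ell \neq p$) have the same characteristic polynomial, combined with Proposition \ref{Proposition:vanGeemenTopGalois} (2).

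For (2), I would apply Lemma \ref{Lemma:Independence} to $X = S_2$ over $\Q_2$ with $f = \mathrm{id}$ and $f = \phi$. Since $H^1 = H^3 = 0$ on $S_2$ and the $H^0$, $H^4$ contributions are easily computed to match on both sides of the lemma, the alternating-sum identity reduces to
\[
\Tr(\sigma \circ \phi^{*k}; H^2_{\text{\'et}}((S_2)_{\overline{\Q}_2}, \Q_\ell)) = \Tr(\sigma \circ \phi^{*k}; \mathrm{WD}(H^2_{\text{\'et}}((S_2)_{\overline{\Q}_2}, \Q_2)))
\]
for $\ell \neq 2$, $k \in \{0,1\}$, and every $\sigma \in W_{\Q_2}$. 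The algebraic part of $H^2$, being generated by cycle classes defined over a number field, has a Galois action factoring through a finite quotient; consequently its contribution to both sides is manifestly the same, so subtracting it yields the same identity with $T_{\Q_\ell}$ in place of $H^2$. Applying the Galois-equivariant projector $P_+$ and then twisting by the $\ell$-independent quadratic character $\chi_{-2}$ transfers this identity to $\rho_{\mathrm{vGT},\ell}(\sigma)$ versus $\mathrm{WD}(\rho_{\mathrm{vGT},2}|_{\Gal(\overline{\Q}_2/\Q_2)})(\sigma)$. Finally, applying this trace identity to $\sigma$, $\sigma^2$, $\sigma^3$ and invoking Newton's identities yields the equality of characteristic polynomials.

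The main technical hurdle is the good-reduction verification in (1); once that is in place, both parts reduce to straightforward bookkeeping with eigenspace projectors, quadratic twists, and the trace-level $\ell$-independence furnished by Lemma \ref{Lemma:Independence}. A secondary subtlety in (2) is the proper treatment of the algebraic part of $H^2$, but as noted this reduces to the fact that its Galois action factors through a finite quotient and therefore contributes identically to both the $\ell$-adic trace and the $2$-adic Weil-Deligne trace.
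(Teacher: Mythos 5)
Your part (2) argument tracks the paper's proof closely: both reduce to Lemma~\ref{Lemma:Independence} applied with $f = \mathrm{id}$ and $f = \phi$, then use $b_1 = b_3 = 0$ to pass from the alternating sum to $H^2$, subtract off the algebraic part, apply the eigenspace projector $P_+$ and the quadratic twist, and recover the characteristic polynomial from traces via Newton's identities. One small imprecision: the Galois action on the algebraic part of $H^2$ does \emph{not} factor through a finite quotient, since it is the (finite) action on $\NS$ tensored with the cyclotomic twist $\Q_\ell(-1)$; what is true, and what you actually need, is that the associated Weil--Deligne representation of the algebraic part is manifestly the same unramified-twist-of-a-finite-action for every $\ell$ including $\ell = p$, so the conclusion stands.

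Your part (1) argument has a genuine gap. After establishing crystallinity, you invoke ``the standard fact that crystalline Frobenius and $\ell$-adic $\Frob_p$ have the same characteristic polynomial.'' That statement (Katz--Messing) applies to the whole $H^2$ and gives the characteristic polynomial of $\Frob_p$ on $T_{\Q_p}$; it does not by itself determine the characteristic polynomial on the summand $V_+$, which is cut out by $\phi^*$. Identifying that factor requires the $\ell$-independence (including the crystalline side) of the mixed traces $\Tr(\Frob_p^k \circ \phi^*)$, which is precisely the content of Lemma~\ref{Lemma:Independence} and is not covered by the ``standard fact'' you cite. The paper avoids this by treating (1) and (2) uniformly: it reduces both to the single trace identity
\[
\iota_p\bigl(\Tr(\sigma; \mathrm{WD}(\rho_{\mathrm{vGT},p}|_{\Gal(\overline{\Q}_p/\Q_p)}))\bigr)
= \iota_{\ell}\bigl(\Tr \rho_{\mathrm{vGT},\ell}(\sigma)\bigr)
\]
for $\sigma \in W_{\Q_p}$ and $p \neq \ell$, and then deduces that from Lemma~\ref{Lemma:Independence} applied with $f = \mathrm{id}$ and $f = \phi$ --- exactly the step you already carry out for part (2). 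The fix is therefore immediate: run your part (2) reduction with $\Q_p$ in place of $\Q_2$ and $\sigma = \Frob_p^k$, instead of appealing to a Frobenius-only fact that does not see the automorphism. Your additional detail on geometric good reduction at odd primes and the observation that $\chi_{-2}$ is unramified away from $2$ are both correct, and somewhat more explicit than what the paper records.
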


\begin{proof}
If $p \neq 2$, since $S_a$ has good reduction outside $2$,
the $p$-adic representation $\rho_{\mathrm{vGT},p}$
is crystalline at $p$ by the crystalline comparison theorem.
(See \cite[Theorem 0.2]{Tsuji}.)

The characteristic polynomial of $\Frob_p$ in (1)
(resp.\ $\sigma$ in (2)) can be calculated
from the traces of powers of $\Frob_p$ (resp.\ $\sigma$).
Hence it is enough to prove
\[
\iota_p(\Tr(\sigma; \mathrm{WD}(\rho_{\mathrm{vGT},p}|_{\Gal(\overline{\Q}_p/\Q_p)})))
= \iota_{\ell}(\Tr \rho_{\mathrm{vGT},\ell}(\sigma))
\]
for any $\sigma \in W_{\Q_p}$ and any $p \neq \ell$ (including $p = 2$).

The equalities of the traces are known on
the subspace of $H^2$ generated by divisors.
As in the proof of Proposition \ref{Proposition:vanGeemenTopGalois} (4),
it is enough to show the equalities of the traces of
$\sigma$, $\sigma \circ \phi^{\ast}$ on $H^2$.
Since the first and the third Betti numbers of $S_2$
are zero (see \cite[Section 3.1]{vanGeemenTop:Invent}),
it is enough to show the following equalities of the alternating sums of the traces for $p \neq \ell$
\begin{align*}
\sum_{i=0}^{4} (-1)^i \Tr(\sigma; \mathrm{WD}(H^i_{\text{\'et}}((S_2)_{\overline{\Q}_p}, \Q_p)))
&=
\sum_{i=0}^{4} (-1)^i \Tr(\sigma; H^i_{\text{\'et}}((S_2)_{\overline{\Q}_p}, \Q_{\ell})), \\
\sum_{i=0}^{4} (-1)^i \Tr(\sigma \circ \phi^{\ast}; \mathrm{WD}(H^i_{\text{\'et}}((S_2)_{\overline{\Q}_p}, \Q_p)))
&=
\sum_{i=0}^{4} (-1)^i \Tr(\sigma \circ \phi^{\ast}; H^i_{\text{\'et}}((S_2)_{\overline{\Q}_p}, \Q_{\ell})).
\end{align*}
These equalities follow from Lemma \ref{Lemma:Independence}.
\end{proof}

\begin{rem}
The $p$-adic representation $\rho_{\mathrm{vGT},p}$
is crystalline at $p$ if and only if $p \neq 2$.
(See Remark \ref{Remark:Ramifiedat2:p=2}.)
\end{rem}

\section{$\ell$-adic representations constructed by
Harris-Lan-Taylor-Thorne and Scholze}
\label{Section:HLTTScholze}

Let $\pi$ be the cuspidal automorphic representation of $\GL_3(\A_\Q)$
of level $\Gamma_0(128)$
associated with the eigenclass $u$.
(See Section \ref{Remark:Cuspidality}.)
Since $\pi$ is cohomological,
we can apply the results of Harris-Lan-Taylor-Thorne and Scholze on the construction of $\ell$-adic representations.
(See \cite[Theorem A]{HLTT}, \cite[Theorem 1.0.4]{Scholze:Torsion}.)
Thus, for every $\ell$,
we have a three-dimensional semisimple $\ell$-adic representation
\[ \rho_{\pi,\ell} \colon \Gal(\overline{\Q}/\Q) \to \GL_3(\overline{\Q}_{\ell}) \]
such that it is unramified outside $2, \ell$ and $\infty$,
and it satisfies the equality
\[
\det(1 - p^{-s} \rho_{\pi,\ell}(\Frob_p))^{-1} = L(s,\pi_p)
\]
for every $p \notin \{ 2, \ell \}$.

The $\ell$-adic representation
$\rho_{\pi,\ell}$ is defined over a finite extension of $\Q_{\ell}$.
However, 
since the construction of $\rho_{\pi,\ell}$ in \cite{HLTT}, \cite{Scholze:Torsion}
was achieved by a combination of several techniques in Galois representations
such as congruences and patching lemmas,
it is not clear how to control the image of $\rho_{\pi,\ell}$
from its construction.

In practice, 
we can use Lemma \ref{Lemma:RepresentationImage}
to control the image.

\begin{lem}
\label{Lemma:ImageAssociated2-adicRepresentation}
Assume $\ell = 2$.
The $2$-adic representation $\rho_{\pi,2}$ is defined over $\Q(\sqrt{-1})_{v_2}$.
Here $v_2$ is the finite place of $\Q(\sqrt{-1})$ above $2$.
\end{lem}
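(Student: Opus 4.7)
The plan is to verify the three hypotheses of Lemma \ref{Lemma:RepresentationImage} for the semisimple representation $\rho = \rho_{\pi,2}$, the group $G = \Gal(\overline{\Q}/\Q)$, and the subfield $K = \Q(\sqrt{-1})_{v_2}$ of $\overline{\Q}_2$. Hypothesis (1) is immediate: by \cite[Lemme 2.2.1.1]{BreuilMezard} and \cite[Corollary 5]{Dickinson} recalled in Section \ref{Section:GaloisRepresentations}, any continuous $\overline{\Q}_2$-valued Galois representation is defined over a finite extension of $\Q_2$, hence over a finite extension of $K$.

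For hypothesis (2), I would use the Chebotarev density theorem together with the continuity of $\rho_{\pi,2}$ and the closedness of $K$ in $\overline{\Q}_2$ to reduce the statement $\Tr \rho_{\pi,2}(\sigma) \in K$ for all $\sigma \in G$ to the case $\sigma = \Frob_p$ with $p \neq 2$. By the defining property of the Harris-Lan-Taylor-Thorne/Scholze construction, $\iota_2(\Tr \rho_{\pi,2}(\Frob_p))$ equals the Hecke eigenvalue $a_p$ of $\pi$ at $p$, so the task reduces to showing $a_p \in \Q(\sqrt{-1})$ for every $p \neq 2$. I would establish this via a field-of-rationality argument: the Hecke algebra acts on the finite-dimensional $\Q$-vector space $H^3(\Gamma_0(128), \Q)$, so the eigensystem of the eigenclass $u$ is defined over a number field $L$; van Geemen-Top's explicit computations recalled in Proposition \ref{Proposition:vanGeemenTopGalois} give $\Q(\sqrt{-1}) \subseteq L$, and counting the size of the $\Gal(\overline{\Q}/\Q)$-orbit of $u$'s eigensystem inside $H^3(\Gamma_0(128), \C)$ forces $L = \Q(\sqrt{-1})$.

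For hypothesis (3), I would take $\sigma_0 = \Frob_p$ for a small prime where van Geemen-Top computed $b_p$ explicitly (for instance $p = 5$). The characteristic polynomial is $T^3 - b_p T^2 + p (b_p)^c T - p^3$ with $b_p \in \Q(\sqrt{-1})$, and it suffices to verify for some such $p$ that this polynomial is separable and splits completely over $\Q(\sqrt{-1})_{v_2}$. This is a finite check, combining a discriminant computation with the Newton polygon over $\Q(\sqrt{-1})_{v_2}$ and Hensel's lemma to lift the roots.

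The main obstacle is clearly hypothesis (2): the finitely many numerical verifications of van Geemen-Top and van Geemen-van der Kallen-Top-Verberkmoes yield $a_p \in \Q(\sqrt{-1})$ only for $p \leq 173$, whereas one needs this for \emph{every} $p \neq 2$. The crux is therefore to extract, from the finite-dimensionality of the $\Q$-rational Hecke module on $H^3(\Gamma_0(128), \Q)$, the structural fact that the field of rationality of $\pi$ is already $\Q(\sqrt{-1})$; once this is in hand, Chebotarev propagates the rationality bound to every element of $\Gal(\overline{\Q}/\Q)$ and the lemma applies.
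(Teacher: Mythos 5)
Your strategy is the same as the paper's: reduce to Lemma~\ref{Lemma:RepresentationImage} and check its three hypotheses, with hypothesis~(3) verified at an explicit prime. The paper's proof is terse about hypotheses~(1) and~(2) — it simply asserts that it suffices to find an element whose characteristic polynomial has three distinct roots in $\Q(\sqrt{-1})_{v_2}$ — and then checks this for $\Frob_3$ and $\Frob_{11}$ using the explicit polynomials from van~Geemen--Top's tables. Your elaboration of hypothesis~(2) is the right thing to make the implicit step explicit: reduce to Frobenii by Chebotarev together with continuity of the trace and closedness of $\Q(\sqrt{-1})_{v_2}$ in $\overline{\Q}_2$, identify the traces at Frobenius with Hecke eigenvalues, and bound the field of rationality of $\pi$ by the finite-dimensionality and $\Q$-rationality of the Hecke module on $H^3(\Gamma_0(128),\Q)$. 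Making the last step airtight does require a dimension count for the relevant piece of the cohomology, which van~Geemen--Top carried out; your proposal gestures at this without citing it, but the idea is correct.

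The one concrete misstep is your choice of example prime for hypothesis~(3). You suggest $p=5$, but by the Remark immediately following the lemma (citing \cite[Section 2.5]{vanGeemenTop:Invent}), among $3 \le p \le 67$ the characteristic polynomial of $\rho_{\pi,2}(\Frob_p)$ has three distinct roots in $\Q(\sqrt{-1})_{v_2}$ precisely when $p \in \{3,11,19,47,61\}$, so $p=5$ fails. Since you hedge with "it suffices to verify for some such $p$" and your verification recipe (discriminant plus Newton polygon plus Hensel) is exactly how one would carry out the check, this is recoverable — you would simply discover the failure at $p=5$ and move on to $p=3$ or $p=11$, which is what the paper does. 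The lesson is that "three distinct roots in $\Q(\sqrt{-1})_{v_2}$" is a genuinely restrictive condition here, not satisfied at a generic small prime.
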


\begin{proof}
By Lemma \ref{Lemma:RepresentationImage},
it is enough to find an element in the image of
$\Gal(\overline{\Q}/\Q)$
whose characteristic polynomial
has three distinct roots in $\Q(\sqrt{-1})_{v_2}$.
For example, $\rho_{\pi,2}(\Frob_{3})$ and $\rho_{\pi,2}(\Frob_{11})$ satisfy
this condition.
In fact, it can be checked that each of the characteristic polynomials
\begin{align*}
\det(T - \rho_{\pi,2}(\Frob_3)) &= (T - 3 \sqrt{-1})(T^2 - (1 - \sqrt{-1}) T - 9 \sqrt{-1})), \\
\det(T - \rho_{\pi,2}(\Frob_{11})) &= T^3 + (7+10\sqrt{-1}) T^2 - 11 \cdot (7+10\sqrt{-1}) T - 11^3
\end{align*}
has three distinct roots in $\Q(\sqrt{-1})_{v_2}$.
\end{proof}

\begin{rem}
For a prime number $p$ with $3 \leq p \leq 67$,
the characteristic polynomial of $\rho_{\pi,2}(\Frob_p)$
has three distinct roots in $\Q(\sqrt{-1})_{v_2}$
if and only if $p \in \{ 3, 11, 19, 47, 61 \}$.
(See \cite[Section 2.5]{vanGeemenTop:Invent}.)
\end{rem}

\begin{rem}
As a corollary of Theorem \ref{MainTheorem},
it can be shown that $\rho_{\pi,\ell}$ is defined over
$\Q(\sqrt{-1})_{v_{\ell}}$ for every $\ell$.
\end{rem}

\begin{rem}
The $\ell$-adic representation $\rho_{\pi,\ell}$
constructed in \cite{HLTT}, \cite{Scholze:Torsion} was expected to
satisfy the following properties:
\begin{enumerate}
\item $\rho_{\pi,\ell}$ is irreducible.
\item $\rho_{\pi,\ell}$ is de Rham at $\ell$. It is crystalline at $\ell$ if and only if $\ell \neq 2$.
\end{enumerate}
These properties were not studied in \cite{HLTT}, \cite{Scholze:Torsion}.
We do not assume them in this paper.
In fact, we can prove them by comparing
$\rho_{\pi,\ell}$ with $\rho_{\mathrm{vGT},\ell}$.
(See Theorem \ref{MainTheorem}.)
\end{rem}

\section{Proof of Theorem \ref{MainTheorem}}
\label{Section:ProofMainTheorem}

\subsection{The case $p \neq 2$ and $p \neq \ell$}

We first consider the $2$-adic representations.

Let $\rho_{\mathrm{vGT},2}$
be the $2$-adic representation constructed by van Geemen-Top
in \cite{vanGeemenTop:Invent}.
(See Section \ref{Subsection:vanGeemenTopConstruction}.)
It is defined over $\Q(\sqrt{-1})_{v_2}$ by construction.
By Proposition \ref{Proposition:vanGeemenTopGalois} (1), it is absolutely irreducible.
It satisfies the equality of local $L$-factors
\[ \det(1 - p^{-s} \rho_{\mathrm{vGT},2}(\Frob_p))^{-1} = L(s,\pi_p) \]
for every $p$ with $3 \leq p \leq 67$.

On the other hand, let
$\rho_{\pi,2}$ be the semisimple $2$-adic representation associated with $\pi$
constructed by Harris-Lan-Taylor-Thorne and Scholze.
(See Section \ref{Section:HLTTScholze}.)
It satisfies
\[
\det(1 - p^{-s} \rho_{\pi,2}(\Frob_p))^{-1} = L(s,\pi_p)
\]
for every $p \geq 3$.
Moreover, by Lemma \ref{Lemma:ImageAssociated2-adicRepresentation},
$\rho_{\pi,2}$ is defined over $\Q(\sqrt{-1})_{v_2}$.

The characteristic polynomials of
$\rho_{\mathrm{vGT},2}(\Frob_p), \rho_{\pi,2}(\Frob_p)$
are equal for every $p$ with $3 \leq p \leq 67$.
By Theorem \ref{Theorem:Grenie}, we conclude that $\rho_{\mathrm{vGT},2}, \rho_{\pi,2}$ are equivalent.
Theorem \ref{MainTheorem} is proved in the case $p \neq 2$ and $\ell = 2$.

From this result, it is easy to deduce the case $p \neq 2$ and $p \neq \ell$ of Theorem \ref{MainTheorem}
by the following corollary.

\begin{cor}
\label{Corollary:EquivalenceGalois}
For any $\ell$,
the $\ell$-adic representations $\rho_{\mathrm{vGT},\ell}$ and $\rho_{\pi,\ell}$
are equivalent.
\end{cor}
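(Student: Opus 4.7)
The plan is to reduce to Chebotarev density, using the $\ell=2$ case just established as the anchor and the built-in $\ell$-independence of both compatible systems. Fix an arbitrary prime $\ell$. Both $\rho_{\mathrm{vGT},\ell}$ and $\rho_{\pi,\ell}$ are three-dimensional semisimple $\ell$-adic representations of $\Gal(\overline{\Q}/\Q)$ unramified outside $\{2,\ell,\infty\}$: the former is absolutely irreducible and in fact unramified outside $\{2,\infty\}$ by Proposition \ref{Proposition:vanGeemenTopGalois} (1), while the latter is semisimple and unramified outside $\{2,\ell,\infty\}$ by the construction of Harris-Lan-Taylor-Thorne and Scholze recalled in Section \ref{Section:HLTTScholze}. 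By the Chebotarev density theorem it is therefore enough to show
\[
\det(T - \rho_{\mathrm{vGT},\ell}(\Frob_p)) \;=\; \det(T - \rho_{\pi,\ell}(\Frob_p))
\]
for every prime $p \notin \{2,\ell\}$.

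For such $p$, Proposition \ref{Proposition:vanGeemenTopGalois} (3) gives that
$\iota_{\ell}\bigl(\det(T-\rho_{\mathrm{vGT},\ell}(\Frob_p))\bigr) \in \Q(\sqrt{-1})[T]$
is independent of $\ell\neq p$; on the other hand, the defining property of $\rho_{\pi,\ell}$ recorded in Section \ref{Section:HLTTScholze} identifies
$\iota_{\ell}\bigl(\det(T-\rho_{\pi,\ell}(\Frob_p))\bigr)$
with the Frobenius polynomial read off from $L(s,\pi_p)^{-1}$ via local Langlands, and in particular this polynomial, too, is independent of $\ell\neq p$. The previous subsection established the equivalence $\rho_{\mathrm{vGT},2}\cong \rho_{\pi,2}$, so these two $\ell$-independent polynomials coincide (for every $p\neq 2$) when evaluated at $\ell=2$. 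Transporting this coincidence back through $\iota_{\ell}$ yields the desired equality of Frobenius characteristic polynomials for all $\ell$ and all $p\notin\{2,\ell\}$, and Chebotarev completes the proof.

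There is no serious obstacle; the only point requiring care is bookkeeping with the fixed isomorphisms $\iota_{\ell}\colon \overline{\Q}_{\ell}\cong\C$ when transferring the comparison of characteristic polynomials between different $\ell$-adic worlds. Because both sides have coefficients in $\Q(\sqrt{-1})\subset\C$ and are independent of $\ell$, the transfer is unambiguous.
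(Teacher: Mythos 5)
Your proposal is correct and follows essentially the same route as the paper: you anchor at $\ell = 2$, transport the Frobenius characteristic polynomials through the $\ell$-independence of the vGT system (Proposition \ref{Proposition:vanGeemenTopGalois} (2),(3)) and the $\ell$-independence of $\rho_{\pi,\ell}$ supplied by the equality with $L(s,\pi_p)$, and then apply Chebotarev together with semisimplicity (noting $\rho_{\mathrm{vGT},\ell}$ is absolutely irreducible) to conclude equivalence, exactly as the paper does.
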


\begin{proof}
By Proposition \ref{Proposition:vanGeemenTopGalois} (2),(3),
the characteristic polynomials of 
$\rho_{\mathrm{vGT},\ell}(\Frob_p)$, $\rho_{\mathrm{vGT},2}(\Frob_p)$
are equal for every $p \notin \{ 2,\ell \}$.
On the other hand, since the equality
\[
\det(1 - p^{-s} \rho_{\pi,\ell}(\Frob_p))^{-1} = L(s,\pi_p)
\]
holds for every $p \notin \{ 2, \ell \}$,
the characteristic polynomials of 
$\rho_{\pi,\ell}(\Frob_p)$, $\rho_{\pi,2}(\Frob_p)$
are equal for every $p \notin \{ 2,\ell \}$.
Thus, the characteristic polynomials of
$\rho_{\mathrm{vGT},\ell}(\Frob_p)$, $\rho_{\pi,\ell}(\Frob_p)$
are equal for every $p \notin \{ 2,\ell \}$.
By the Chebotarev density theorem
(see \cite[Chapter I, Section 2.2]{Serre:l-adic}),
$\rho_{\mathrm{vGT},\ell}$, $\rho_{\pi,\ell}$
have equivalent semisimplifications.
By Proposition \ref{Proposition:vanGeemenTopGalois} (1),
$\rho_{\mathrm{vGT},\ell}$ is absolutely irreducible.
Hence $\rho_{\pi,\ell}$ is also absolutely irreducible, and $\rho_{\mathrm{vGT},\ell}$, $\rho_{\pi,\ell}$
are equivalent.
\end{proof}

The proof of Theorem \ref{MainTheorem}
in the case $p \neq 2$ and $p \neq \ell$ is complete.

\subsection{The case $p \neq 2$ and $p = \ell$}

Next, we shall prove Theorem \ref{MainTheorem}
in the case $p \neq 2$ and $p = \ell$.

The assertion on the Hodge-Tate weights follows from the calculation of the Hodge numbers of the surface $S_2$ (see \cite[Section 3.2]{vanGeemenTop:Invent})
and the comparison theorem between de Rham cohomology and $p$-adic \'etale cohomology (see \cite[Theorem A1]{Tsuji:Survey}).

The assertion for local $L$-factors follows from Corollary \ref{Corollary:EquivalenceGalois} and 
Proposition \ref{Proposition:vanGeemenTopGalois:l=p} (1).

\subsection{The case $p = 2$ and $p \neq \ell$}

Here we shall prove Theorem \ref{MainTheorem}
in the case $p = 2$ and $p \neq \ell$.
For this purpose, we need the recent results of Varma.

In \cite{Varma}, Varma proved the local-global compatibility,
up to semisimplification.
(See \cite[Theorem ($1^{ss}$)]{Varma} for the precise statement.)
By Varma's results, for any $\ell \neq 2$,
$\mathrm{WD}(\rho_{\pi,\ell}|_{W_{\Q_2}})^{\text{\rm F-ss}}$
and
$\iota_{\ell}^{-1} \mathrm{rec}_{\Q_2}(\pi_2)$
are equivalent as representations of the Weil group $W_{\Q_2}$.

We need to prove the coincidence, up to equivalence, of the monodromy operators.
We shall do it by the method of Taylor-Yoshida \cite{TaylorYoshida}.
By Corollary \ref{Corollary:EquivalenceGalois},
$\rho_{\mathrm{vGT},\ell}$, $\rho_{\pi,\ell}$ are equivalent as $\ell$-adic representations of $\Gal(\overline{\Q}/\Q)$.
Thus, we have an equivalence of Weil-Deligne representations
\[
\mathrm{WD}(\rho_{\mathrm{vGT},\ell}|_{W_{\Q_2}})^{\text{\rm F-ss}}
\cong
\mathrm{WD}(\rho_{\pi,\ell}|_{W_{\Q_2}})^{\text{\rm F-ss}}.
\]

The $\ell$-adic representation $\rho_{\mathrm{vGT},\ell}$ is,
up to a quadratic twist,
embedded as a direct summand into the $\ell$-adic cohomology of $S_2$.
(See Section \ref{Subsection:vanGeemenTopConstruction}.)
Since the weight-monodromy conjecture is known to be true for surfaces
(see \cite[Lemma 3.9]{Saito:l-independence}),
we see that
$\mathrm{WD}(\rho_{\pi,\ell}|_{W_{\Q_2}})^{\text{\rm F-ss}}$
is \textit{pure} in the sense of \cite[Section 1, p.~471]{TaylorYoshida}.
(Precisely, the above argument only gives
$\mathrm{WD}(\rho_{\pi,\ell}|_{W_{\Q_2}})^{\text{\rm F-ss}}$
is $\iota_{\ell}$-pure.
Applying the whole arguments to every choice of $\iota_{\ell}$,
we see that it is pure.)

The rest of the proof is exactly the same as in \cite{TaylorYoshida}.
The purity of
$\mathrm{WD}(\rho_{\pi,\ell}|_{W_{\Q_2}})^{\text{\rm F-ss}}$
implies it is \textit{mixed} as a representation of $W_{\Q_2}$.
For every $\sigma \in W_{\Q_2}$,
every eigenvalue $\alpha$ of $\rho_{\pi,\ell}(\sigma)$ is an algebraic number
such that $\lvert \iota(\alpha) \rvert^2$
is an integer power of $2$ for any embedding
$\iota \colon \Q(\alpha) \hookrightarrow \C$.

Recall that $\pi' = \pi \otimes \lvert \det \rvert$
is a unitary cuspidal automorphic representation of $\GL_3(\A_{\Q})$.
(See Remark \ref{Remark:UnitaryTwist}.)
As a representation of $W_{\Q_2}$,
$\iota_{\ell}^{-1} \mathrm{rec}_{\Q_2}(\pi'_2)$ is equivalent to
the semisimplification of $\rho_{\pi,\ell}(1)$,
where $(1)$ denotes the Tate twist.
Therefore, for every $\sigma \in W_{\Q_2}$,
every eigenvalue $\alpha'$ of $\iota_{\ell}^{-1} \mathrm{rec}_{\Q_2}(\pi'_2)$
is an algebraic number
such that $\lvert \iota(\alpha') \rvert^2$
is an integer power of $2$ for any embedding
$\iota \colon \Q(\alpha') \hookrightarrow \C$.
By \cite[Corollary VII.2.18]{HarrisTaylor}, $\pi'_2$ is absolutely tempered.
By \cite[Lemma 1.4 (3)]{TaylorYoshida},
the Weil-Deligne representation $\iota_{\ell}^{-1} \mathrm{rec}_{\Q_2}(\pi'_2)$ is pure.
Hence its Tate twist
$\iota_{\ell}^{-1} \mathrm{rec}_{\Q_2}(\pi_2) = (\iota_{\ell}^{-1} \mathrm{rec}_{\Q_2}(\pi'_2))(-1)$ 
is also pure.

Since both
$\mathrm{WD}(\rho_{\pi,\ell}|_{W_{\Q_2}})^{\text{\rm F-ss}}$
and
$\iota_{\ell}^{-1} \mathrm{rec}_{\Q_2}(\pi_2)$
are pure,
the monodromy operators on them coincide,
up to equivalence, by \cite[Lemma 1.4 (4)]{TaylorYoshida}.

The proof of Theorem \ref{MainTheorem}
in the case $p = 2$ and $p \neq \ell$ is complete.

\begin{rem}
In this paper, we follow the convention
that ``tempered'' means the twist by a character is
a unitary tempered representation.
(See also \cite[p.~470]{TaylorYoshida}.)
\end{rem}

\begin{rem}
There is a typo in the statement of \cite[Lemma 1.4 (3)]{TaylorYoshida}; we need the condition that the absolute values of
the central characters of $\sigma \pi$ are the same for all $\sigma \colon \Omega \hookrightarrow \C$.
In our situation, since the central character of $\pi'_2$ (resp.\ $\pi_2$) is trivial (resp.\ $\lvert \cdot \rvert^{-3}$),
we can apply \cite[Lemma 1.4 (3)]{TaylorYoshida} to $\pi'_2$ (resp.\ $\pi_2$).
\end{rem}

\begin{rem}
\label{Remark:Ramifiedat2}
The irreducible admissible representation $\pi_2$ of $\GL_3(\Q_2)$ is not unramified,
i.e.\ it does not have a non-zero vector fixed by $\GL_3(\Z_2)$.
The local-global compatibility proved as above implies
the $\ell$-adic representation
$\rho_{\pi,\ell} \cong \rho_{\mathrm{vGT},\ell}$
is ramified at $2$ for every $\ell \neq 2$.
\end{rem}

\subsection{The case $p = \ell = 2$}

Finally, we shall prove the case $p = \ell = 2$ of
Theorem \ref{MainTheorem}.
This is the most delicate case because
the properties of
the associated $2$-adic representation $\rho_{\pi,2}$ at $2$ were not known
by the results of Harris-Lan-Taylor-Thorne and Scholze.
We need to switch to $\rho_{\mathrm{vGT},2}$ to obtain necessary results.

By the same argument as before,
in the case $p = \ell = 2$,
we need to prove the following:
\begin{enumerate}
\item For every $\sigma \in W_{\Q_2}$, we have the equality of the traces
\[
\iota_2(\Tr(\sigma; \mathrm{WD}(\rho_{\pi,2}|_{\Gal(\overline{\Q}_2/\Q_2)})))
=
\Tr(\sigma; \mathrm{rec}_{\Q_2}(\pi_2)).
\]

\item The Weil-Deligne representation $\mathrm{WD}(\rho_{\pi,2}|_{\Gal(\overline{\Q}_2/\Q_2)})$
is pure in the sense of \cite[Section 1, p.~471]{TaylorYoshida}.
\end{enumerate}

We shall prove (1).
Take an auxiliary prime number $\ell' \neq 2$.
For every $\sigma \in W_{\Q_2}$,
by the results of Varma (see \cite[Theorem ($1^{ss}$)]{Varma}),
we have
\[
\iota_{\ell'}(\Tr \rho_{\pi,\ell'}(\sigma))
= \Tr(\sigma; \mathrm{rec}_{\Q_2}(\pi_2)).
\]
Hence the assertion follows from
Corollary \ref{Corollary:EquivalenceGalois} and 
Proposition \ref{Proposition:vanGeemenTopGalois:l=p} (2).

We shall prove (2).
By the same argument as in the case $p = 2$ and $p \neq \ell$,
the $2$-adic representation $\rho_{\pi,2}|_{\Gal(\overline{\Q}_2/\Q_2)} \cong
\rho_{\mathrm{vGT},2}|_{\Gal(\overline{\Q}_2/\Q_2)}$
is, up to a quadratic twist,
embedded as a direct summand into the $2$-adic cohomology of $S_2$.
The purity of
$\mathrm{WD}(\rho_{\pi,2}|_{\Gal(\overline{\Q}_2/\Q_2)})$
is a consequence of the weight-monodromy conjecture for the 
$2$-adic \'etale cohomology of proper smooth surfaces
over $2$-adic fields.
This follows from de Jong's alteration, 
the comparison theorem in the semi-stable case \cite{Tsuji},
and the weight spectral sequence of Mokrane \cite{Mokrane}.

This completes the proof of the case $p = \ell = 2$ of
Theorem \ref{MainTheorem}.

The proof of Theorem \ref{MainTheorem} is complete.

\begin{rem}
\label{Remark:Ramifiedat2:p=2}
Since $\pi_2$ is not unramified,
the local-global compatibility in the case $p=\ell=2$ proved as above implies the $2$-adic representation
$\rho_{\pi,2} \cong \rho_{\mathrm{vGT},2}$
is not crystalline at $2$.
\end{rem}

\subsection*{Acknowledgements}

The work of the first author was supported by JSPS KAKENHI Grant Number 20674001 and 26800013.
The work of the third author was supported by JSPS KAKENHI Grant Number 15H03605.

\end{document}